 \newtheorem{theorem}{Theorem}[section]
\newtheorem{lemma}{Lemma}[section]
\newtheorem{corollary}[lemma]{Corollary}
\newtheorem{remark}[lemma]{Remark}
\newcommand{\R}{\mathbb{R}}
\renewcommand{\H}{\mathcal{H}} 
\newcommand{\E}{E_{p,\la}} 
\newcommand{\A}{\mathcal{A}}
\newcommand{\lra}{\longrightarrow} 
\newcommand{\Lra}{\Longrightarrow}
 \newcommand{\sse}{\subseteq}
\renewcommand{\-}{\bar}
\newcommand{\pd}{\partial} 
\newcommand{\rhu}{\;{\overset{*}{\rightharpoonup}}\; }
\newcommand{\8}{\infty} 
\newcommand{\vph}{\varphi}
\newcommand{\vep}{\varepsilon} 
\newcommand{\al}{\alpha}
 \newcommand{\bt}{\beta}
\newcommand{\gm}{\gamma}
\newcommand{\sm}{\Sigma}
 \newcommand{\om}{\Omega}
\newcommand{\la}{\lambda} 
\renewcommand{\vartheta}{\Theta}
\DeclareMathOperator{\dist}{dist}
\DeclareMathOperator{\diam}{{\operatorname{diam}}}
\renewcommand{\d}{\,{\operatorname{d}}}
\title{The average distance problem with perimeter-to-area ratio penalization\footnote{This paper will appear in SIAM Journal on Mathematical Analysis. }}
\author{Qiang Du\thanks{Department of Applied Physics and Applied Mathematics, and Data Science Institute, Columbia University,
500 W. 120th St., New York, NY 10027, USA. Email: qd2125@columbia.edu}
\and Xin Yang Lu \thanks{Department of Mathematical Sciences, Lakehead University, 955 Oliver Rd., Thunder Bay, ON
P7B 5E1, Canada 
AND
Department of Mathematics and Statistics, McGill University,
805 Sherbrooke St. W., Montreal, QC H3A 0B9, Canada. Email: xlu8@lakeheadu.ca}
\and Chong Wang \thanks{Department of Mathematics, Washington and Lee University,
204 W Washington St., Lexington, VA
24450, USA. Email: cwang@wlu.edu}}
\date{}
\begin{document}
\maketitle

\begin{abstract}
\noindent 
In this paper we consider the functional
\begin{equation*}
E_{p,\la}(\Omega):=\int_\Omega \dist^p(x,\pd \Omega )\d x+\la \frac{\H^1(\pd \Omega)}{\H^2(\Omega)}.
\end{equation*}
Here 
$p\geq 1$, $\la>0$ are given parameters, the unknown $\Omega$ varies among compact, convex, Hausdorff two-dimensional sets of $\R^2$, $\pd \Omega$
denotes the boundary of $\Omega$, and $\dist(x,\pd \Omega):=\inf_{y\in\pd \Omega}|x-y|$. The integral term 
$\int_\Omega \dist^p(x,\pd \Omega )\d x$ quantifies the ``easiness'' for points in $\Omega$ to reach the boundary,
while $\frac{\H^1(\pd \Omega)}{\H^2(\Omega)}$ is the perimeter-to-area ratio.
The main aim is to prove existence and
$C^{1,1}$-regularity of minimizers of $\E$.
 \end{abstract}
 
\textbf{Keywords.}
perimeter-to-area ratio, regularity

\textbf{Classification. }
49Q20, 
 49K10, 
  35B65 

\section{Introduction}

The perimeter-to-area ratio (in 2D), or surface area-to-volume ratio (in 3D), plays a crucial role in many 
processes. In biology, for instance, the size of prokaryote cells is limited by the efficiency of diffusion processes,
 fundamental to transport nutrients across the cell, which is strongly correlated with the surface area-to-volume ratio.
A larger surface area-to-volume ratio also gives prokaryote cells a high metabolic rate, fast growth, and short lifespan
compared to eukaryote cells (see for instance \cite{pro}).

 
  In chemistry, higher surface area-to-volume ratio increases the typical speed
  of chemical reactions. This phenomenon can be observed in many
 instances , sometimes quite dramatically, 
such as dust explosions, when dust particles of seemingly non-flammable materials (e.g., aluminum, sugar, flour, etc.) can be ignited
 due to their very large surface area-to-volume ratio (\cite{dust,dust-book}).
 
 \medskip
 
 In this paper we will focus on the 2D case.
In the above examples, there are essentially two
often competing quantities: one is the ``easiness'' to access the boundary, and the other
is the perimeter-to-area ratio.

A very thin, rod-like, rectangular body would have very good access to boundary (desirable), but large perimeter-to-area ratio. A disk 
 would have the lowest perimeter-to-area ratio (desirable) among
shapes of the same total area, but access to boundary would be limited.
It is also possible to have both large perimeter-to-area ratio and
limited access to boundary. 


 Until now, we have discussed the ``easiness'' of accessing the boundary only at a qualitative
 level.
In order to quantify it, we introduce the ``average distance'' term
\begin{equation*}
F_p(\Omega):=
\int_{\Omega} \dist^p(x,\pd \Omega)\d x,
\end{equation*}
where $\dist(x,\pd \Omega):=\inf_{y\in \pd \Omega}|x-y|$; $p\geq 1$ is a given parameter; and $|\cdot|$ denotes the Euclidean distance.


Consider the energy functional
\begin{eqnarray} \label{func}
E_{p,\lambda} (\Omega) = \int_{\Omega}  \text{ dist}^p (x, \partial \Omega) \ dx + \lambda \frac{ {\mathcal H}^1 (\partial \Omega)}{{\mathcal H}^2 ( \Omega)},
\end{eqnarray}
where $p \geq 1, \lambda > 0$ are given parameters.
Define the admissible set
\begin{eqnarray}
{\mathcal {A}}  := \{ \Omega :  \Omega \subset \mathbb{R}^2  \text{ is compact, convex and Hausdorff two-dimensional} \}. \nonumber
\end{eqnarray}
The term $\frac{\H^1(\pd \Omega)}{\H^2(\Omega)}$ is the perimeter-to-area ratio.
Note that neither the perimeter $\H^1(\pd \Omega)$, nor the area $\H^2(\Omega)$, is penalized, only their ratio is. This 
makes compactness results quite challenging to prove, and several estimates (in Section \ref{prel})
will be required. Another issue is that it is not very clear if the average-distance term is
just a lower order perturbation of $\frac{\H^1(\pd \Omega)}{\H^2(\Omega)}$.
The role of convexity is to ensure crucial compactness estimates (Lemmas \ref{p}
and \ref{exist}).
Note that $E_{p,\la}$ is invariant under rigid movements. Further details about the space of convex sets,
and its topology, will be discussed in Section \ref{prel}.
The main result of this paper is:
\begin{theorem}\label{main}
 Given $p \geq 1, \lambda > 0$, the following assertions hold:
 \begin{itemize}
 \item[(1)]  $E_{p,\lambda}$ admits a minimizer in ${\mathcal {A}}$.
 \item[(2)]   All minimizers are compact, convex, $C^{1,1}$-regular sets, with Hausdorff dimension equal to $2$.
 \item[(3)]   The perimeter-to-area ratio of any minimizer $\Omega$ satisfies
             $$\frac{\H^1(\pd \Omega)}{\H^2(\Omega)} = \frac{p+2}{\la(p+3)}\min_{\mathcal A} \E.$$
 \end{itemize}
\end{theorem}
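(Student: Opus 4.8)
The plan is to extract a single scaling identity, which yields (3) immediately and also exposes the coercivity behind (1), then to prove existence (1) by compactness, and finally to settle the regularity (2); I expect the compactness in (1) to be the structural crux, while (2) becomes surprisingly tractable once a monotonicity property of the distance term is isolated.

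\textbf{Scaling identity and assertion (3).} Write $F_p(\Omega):=\int_\Omega\dist^p(x,\pd\Omega)\d x$ and $R(\Omega):=\H^1(\pd\Omega)/\H^2(\Omega)$. Under the homothety $\Omega\mapsto t\Omega$, $t>0$, one has $\H^2(t\Omega)=t^2\H^2(\Omega)$, $\H^1(\pd(t\Omega))=t\,\H^1(\pd\Omega)$ and $\dist(tx,\pd(t\Omega))=t\,\dist(x,\pd\Omega)$, so the change of variables $y=tx$ gives
\[
\E(t\Omega)=t^{p+2}F_p(\Omega)+\la\,t^{-1}R(\Omega).
\]
If $\Omega$ is a minimizer, then $t\mapsto\E(t\Omega)$ attains its minimum at $t=1$, and $\frac{d}{dt}\E(t\Omega)\big|_{t=1}=0$ gives $(p+2)F_p(\Omega)=\la R(\Omega)$. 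Substituting $F_p(\Omega)=\la R(\Omega)/(p+2)$ into $\min_\A\E=F_p(\Omega)+\la R(\Omega)$ yields $\min_\A\E=\la R(\Omega)\,\frac{p+3}{p+2}$, i.e. $R(\Omega)=\frac{p+2}{\la(p+3)}\min_\A\E$, which is exactly (3).

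\textbf{Existence, assertion (1).} The identity already shows $\E(t\Omega)\to+\infty$ both as $t\to0^+$ (the $\la t^{-1}R$ term) and as $t\to+\infty$ (the $t^{p+2}F_p$ term), so no minimizing sequence can escape to a degenerate scale. For a minimizing sequence $(\Omega_n)$ I would argue that a uniform energy bound forces $\diam\Omega_n$ bounded above (else $F_p(\Omega_n)\to\infty$) and $\H^2(\Omega_n)$ bounded below (else the isoperimetric inequality forces $R(\Omega_n)\to\infty$); bounded diameter together with bounded-below area forces, through convexity, the width to stay bounded below, ruling out collapse onto a segment. These non-degeneracy estimates are the content of Lemmas \ref{p} and \ref{exist}. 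Being confined to a fixed ball and non-degenerate, $(\Omega_n)$ admits, by the Blaschke selection theorem, a subsequence converging in Hausdorff distance to a compact convex two-dimensional set; the continuity of perimeter and area for convex bodies under Hausdorff convergence (legitimate since there is no dimension drop) together with lower semicontinuity of $F_p$ then shows the limit is a minimizer. Assertion (3) follows.

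\textbf{Regularity, assertion (2).} The engine is monotonicity of the distance term under inclusion: if $\Omega'\sse\Omega$ then for $x\in\ii\Omega'$ one has $\dist(x,\pd\Omega')=\dist(x,(\Omega')^c)\le\dist(x,\Omega^c)=\dist(x,\pd\Omega)$, hence $F_p(\Omega')\le F_p(\Omega)$. Thus any competitor obtained by slicing a cap off $\Omega$ along a chord cannot increase $F_p$, and the sign of the energy change is governed entirely by $R$. At a corner of interior angle $<\pi$, cutting at scale $\ell$ produces $\delta\H^1=O(\ell)$, $\delta\H^2=O(\ell^2)$, hence $\delta R=\H^2(\Omega)^{-1}(\delta\H^1-R\,\delta\H^2)<0$ for small $\ell$; since $\delta F_p\le0$, the energy strictly drops, so minimizers have no corners and $\pd\Omega\in C^1$. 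At a boundary point where the curvature $\kappa$ exists (a.e., by Alexandrov's theorem for convex boundaries), a chord cut of half-angle $\al$ gives $\delta\H^1-R\,\delta\H^2=\frac{\al^3}{3\kappa^2}(2R-\kappa)+o(\al^3)$, which is negative precisely when $\kappa>2R$; minimality therefore forces $\kappa\le 2R$ a.e. Combined with $\kappa\ge0$ from convexity, $\pd\Omega$ has bounded curvature, equivalently a uniform interior-ball condition, which is the asserted $C^{1,1}$-regularity.

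\textbf{Main obstacle.} The genuine difficulty is the compactness in (1): because neither $\H^1(\pd\Omega)$ nor $\H^2(\Omega)$ is penalized on its own, a minimizing sequence could a priori degenerate in scale (diameter tending to $0$ or $\infty$) and in shape (width tending to $0$) simultaneously, and only the quantitative interplay of the two terms --- captured by the estimates of Section \ref{prel} --- rules this out; this is where the convexity hypothesis is indispensable. The regularity argument, by contrast, is clean once the monotonicity $F_p(\Omega')\le F_p(\Omega)$ is in hand, the only remaining technical point being to make the cap-cutting expansions uniform and to invoke the a.e. second-order structure of convex boundaries.
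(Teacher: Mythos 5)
Your treatment of assertion (3) is exactly the paper's argument (homothety, differentiate at $t=1$) and is correct; the monotonicity $F_p(\Omega')\le F_p(\Omega)$ for $\Omega'\sse\Omega$ is also correctly justified, and your corner-cutting step reproduces the paper's Lemma \ref{C1} (though you should record that the perimeter drop at a corner of angle $\al$ is $2\ell(1-\sin(\al/2))+O(\ell^2)$, i.e.\ strictly of order $\ell$, not merely $O(\ell)$). The two places below are genuine gaps.

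\textbf{Existence.} Your justification for the diameter bound --- ``$\diam\Omega_n$ bounded above (else $F_p(\Omega_n)\to\infty$)'' --- is false as stated. A $D\times\vep$ rectangle has $F_p\le D\vep\,(\vep/2)^p$, which with $\vep=1/D$ equals $(2D)^{-p}\to0$ as $D\to\infty$: large diameter alone does not force the distance term to blow up; for such sets the energy explodes only through the ratio term. The diameter (equivalently perimeter) bound therefore cannot be decoupled from the ratio: one needs a quantitative inequality coupling the two terms, namely the paper's \eqref{p claim1}, $F_p(\Omega)\ge 3^{-p}2^{-p-4}\,\H^2(\Omega)^{p+1}/\H^1(\pd\Omega)^{p}$, proved via an inscribed-triangle construction, which combined with the (trivially obtained) ratio bound $\H^1/\H^2\le C_2$ gives $F_p\gtrsim\H^1(\pd\Omega_n)$ and hence bounded perimeter. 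You point to Lemmas \ref{p} and \ref{exist} for these ``non-degeneracy estimates,'' but in a self-contained proof this inequality \emph{is} the work, and the heuristic you offer in its place would not produce it. Granting the bounds, your Blaschke-selection route with continuity of area and perimeter for non-degenerating convex bodies is correct, and is in fact cleaner than the paper's argument (symmetric-difference metric plus a BV-compactness theorem for constant-speed boundary parameterizations).

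\textbf{Regularity.} The $C^{1,1}$ step fails. Your cap-cutting expansion is carried out only at points where the curvature $\kappa$ exists (Alexandrov, a.e.), so minimality yields $\kappa\le 2R$ only almost everywhere. But for a convex $C^1$ curve the curvature is in general a nonnegative \emph{measure}, and an a.e.\ pointwise bound says nothing about its singular part; ``curvature bounded a.e.''\ is \emph{not} equivalent to a uniform interior-ball condition or to $C^{1,1}$. Concretely, a convex curve whose tangent angle is a Cantor staircase in arc length is $C^1$ and has Alexandrov curvature $0$ a.e., yet it is not $C^{1,1}$; your argument cannot exclude that a minimizer has boundary of this type. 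This is precisely the trap the paper's Lemma \ref{C2} avoids: the cut is performed at an \emph{arbitrary} boundary point, with no second-order differentiability assumed, and minimality is converted into the derivative-free inequality $C\bigl(\tfrac{f(\vep)\vep}{2}-\int_0^\vep f\bigr)\ge\int_0^\vep|f'|^2$ for the local graph $f$, which via H\"older (\eqref{f1}--\eqref{fH}) bootstraps to the uniform bound $f(\vep)\le\tfrac C2\vep^2$ at \emph{every} point, with $C$ depending only on $p,\la$ through $C_1,C_3$. For convex curves such an everywhere second-difference bound does imply that the curvature measure is absolutely continuous with bounded density, i.e.\ $C^{1,1}$. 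To repair your proof you must replace the a.e.\ expansion by a uniform estimate of this kind.
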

Here, and for future reference, the expression ``$\Omega$ is $C^{k}$-regular'' means that its boundary
$\pd \Omega$ is $C^k$-regular, i.e., $\pd \Omega$ admits
a $C^k$-regular parameterization. 

\medskip

Note that the functional $F_p$ is formally similar to the average-distance functional
\begin{equation*}
\Sigma\mapsto \int_{\Gamma}\dist^p(x,\Sigma)\d\mu,
\end{equation*}
where $\Gamma$ is a given domain, $\mu$ a given measure on $\Gamma$, and $\Sigma$ varies among compact,
path-wise connected sets with Hausdorff dimension equal to 1.
The average-distance functional has been
widely studied, and used in several modeling problems. For a (non exhaustive) list 
of references, we cite the papers (and books) by Buttazzo and collaborators \cite{BMS,BOS, BPSS, BS3,BS4,BS1,BS2}.
 Also related are the papers by Paolini and Stepanov \cite{PaoSte},
 Santambrogio and Tilli \cite{ST}, Tilli \cite{Til}, Lemenant and Mainini \cite{LM},
 Slep\v{c}ev \cite{Slepcev}, and the review paper by Lemenant \cite{Lem}.
 Similar variational problems entailing a competition between classical perimeter and nonlocal repulsive interaction were studied by Muratov and Kn\"upfer \cite{mk},
 Goldman, Novaga and Ruffini \cite{gnru}, and Goldman, Novaga and R\"oger \cite{gnro}.  Figalli, Fusco, Maggi, Millot, and Morrini studied a competition between a nonlocal $s$-perimeter and a nonlocal repulsive interaction term \cite{ffmmm}.


The rest of the paper is structured as follows: section \ref{prel} is dedicated to proving some auxiliary estimates on the area (\eqref{a}
and Corollary \ref{a1}) and perimeter (Lemma \ref{p}) of elements of minimizing sequences. Existence of minimizers will be shown in section \ref{sec:existence}, while $C^{1,1}$ regularity will be proven in section \ref{sec:proof}. 
Finally, we explore several future directions to further our understanding of the penalized average distance problem.

\section{Preliminary estimates} \label{prel}

In this section we collect some preliminary estimates that will be used later.
First, we remark that
given $p\geq 1$ and $\la>0$, for any $ \Omega \in {\mathcal A}$ it holds
\begin{equation}
 \H^2(\Omega) \geq \frac{4\pi\la^2}{\E(\Omega)^2}.
 \label{a}
\end{equation}
Indeed, 
consider an arbitrary $ \Omega \in {\mathcal A}$. By the isoperimetric inequality, among all convex sets
with area $\H^2(\Omega)$, the perimeter-to-area ratio is minimum for a disk,
where it attains the value $2\sqrt{\pi}/\sqrt{\H^2(\Omega)}$.
Hence
\begin{equation*}
\frac{2\la\sqrt{\pi}}{\sqrt{\H^2(\Omega)}} \leq \la\frac{\H^1(\pd \Omega)}{\H^2(\Omega)} \leq \E(\Omega),
\end{equation*}
and \eqref{a} is proven.

\begin{corollary} \label{a1}
Given $p\geq 1$, $\la>0$, any minimizing sequence $\Omega_n\sse {\mathcal A}$ satisfies
\begin{align}
\H^2(\Omega_n) &\geq 4\pi\la^2 \bigg( \frac{2\pi}{p^2+3p+2}+2\la+1  \bigg)^{-2} =:C_1, \label{a1-1}\\
\frac{\H^1(\pd \Omega_n)}{\H^2(\Omega_n)} &\leq \frac1\la\bigg( \frac{2\pi}{p^2+3p+2}+2\la+1  \bigg)=: C_2,\label{a1-2}
\end{align}
for any sufficiently large $n$.
\end{corollary}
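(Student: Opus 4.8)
The plan is to derive an explicit a priori upper bound on $\E(\Omega_n)$, valid for all large $n$, and then insert it into the area estimate \eqref{a}. Since $(\Omega_n)$ is a minimizing sequence, by definition $\E(\Omega_n)\to\inf_{\mathcal A}\E$; hence it suffices to bound $\inf_{\mathcal A}\E$ from above by evaluating $\E$ on a single admissible competitor. The natural choice is the closed unit disk $B_1$, which clearly belongs to $\mathcal A$ and on which both terms of the functional admit closed-form expressions.

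First I would compute $\E(B_1)$. For the distance term, using $\dist(x,\pd B_1)=1-|x|$ and passing to polar coordinates gives
\begin{equation*}
\int_{B_1}\dist^p(x,\pd B_1)\d x=2\pi\int_0^1(1-\rho)^p\,\rho\d\rho=\frac{2\pi}{(p+1)(p+2)}=\frac{2\pi}{p^2+3p+2},
\end{equation*}
while the perimeter-to-area ratio of $B_1$ equals $2\pi/\pi=2$. Thus $\E(B_1)=\frac{2\pi}{p^2+3p+2}+2\la$, and therefore $\inf_{\mathcal A}\E\le\frac{2\pi}{p^2+3p+2}+2\la$.

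Combining this with the defining property of a minimizing sequence, for all sufficiently large $n$ one has $\E(\Omega_n)\le\inf_{\mathcal A}\E+1\le\frac{2\pi}{p^2+3p+2}+2\la+1$. Substituting this bound into \eqref{a} immediately yields \eqref{a1-1}. For \eqref{a1-2}, I would use that the distance term $\int_{\Omega_n}\dist^p(x,\pd\Omega_n)\d x$ is nonnegative, whence $\la\,\frac{\H^1(\pd\Omega_n)}{\H^2(\Omega_n)}\le\E(\Omega_n)\le\frac{2\pi}{p^2+3p+2}+2\la+1$; dividing by $\la$ gives the claim.

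There is no genuine obstacle in this argument: it rests entirely on splitting $\E$ into two nonnegative pieces together with the lower bound \eqref{a}. The only computation of any substance is the elementary polar-coordinate integral for the disk, and the additive constant $+1$ appearing in $C_1$ and $C_2$ merely absorbs the vanishing gap $\E(\Omega_n)-\inf_{\mathcal A}\E$, which is precisely why the estimates are asserted only for sufficiently large $n$.
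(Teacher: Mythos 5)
Your proposal is correct and follows essentially the same route as the paper's proof: evaluate $\E$ on the unit disk to bound $\inf_{\mathcal A}\E$, use the defining property of a minimizing sequence to get $\E(\Omega_n)\le \frac{2\pi}{p^2+3p+2}+2\la+1$ for large $n$, then feed this into \eqref{a} for the area bound and use nonnegativity of the distance term for the ratio bound. The polar-coordinate computation and both concluding steps match the paper's argument exactly.
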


\begin{proof}
First we prove $\inf_{\mathcal A} \E<+\8$. Let $B_1\in {\mathcal A}$ be a disk of radius $1$. Direct computation gives
\begin{align}
\inf_{{\mathcal A}} \E\leq \E(B_1)&=\int_{B_1} \dist^p(x,\pd B_1)\d x+ \la \frac{\H^1(\pd B_1)}{\H^2(B_1)}\notag\\
&=2\pi\int_0^1 (1-r)^pr\d r +2\la
=\frac{2\pi}{p^2+3p+2}+2\la<+\8.\label{inf E}
\end{align}
Thus, given a minimizing sequence $\Omega_n\sse {\mathcal A}$, there exists $N$ such that for any $n\geq N$ it holds
\begin{equation}
\E(\Omega_n) \leq \frac{2\pi}{p^2+3p+2}+2\la+1, \label{a1 E}
\end{equation}
and \eqref{a} gives
\begin{equation*}
  \H^2(\Omega_n) \geq 4\pi\la^2 \bigg( \frac{2\pi}{p^2+3p+2}+2\la+1  \bigg)^{-2},
\end{equation*}
for any $n\geq N$, hence \eqref{a1-1}. To prove \eqref{a1-2}, note that \eqref{a1 E} forces
\begin{equation*}
\frac{2\pi}{p^2+3p+2}+2\la+1\geq \E(\Omega_n)\geq \la\frac{\H^1(\pd \Omega_n)}{\H^2(\Omega_n)},
\end{equation*}
concluding the proof.
\end{proof}


\begin{lemma}\label{p}
Given $p\geq 1$ and $\la>0$, for any minimizing sequence $\Omega_n\sse {\mathcal A}$,
it holds, for all sufficiently large $n$,
\begin{equation}\label{p-1}
 \H^1(\pd \Omega_n) \leq  C_3 = C_3(p,\la)
\end{equation}
 with  $C_3$ being some computable (but uninfluential) constant.
\end{lemma}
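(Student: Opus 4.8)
The plan is to bound $\H^1(\pd\Omega_n)$ by first controlling the \emph{area} $\H^2(\Omega_n)$ from above and then invoking the ratio estimate \eqref{a1-2} of Corollary \ref{a1}, which reads $\H^1(\pd\Omega_n)\le C_2\,\H^2(\Omega_n)$. The genuine difficulty is that neither the perimeter nor the area is penalized on its own, so a priori a minimizing sequence could degenerate into longer and longer slab-like sets with bounded ratio; the whole point is that such a degeneration is incompatible with the two bounds already at our disposal, namely the bound on the average-distance term and the bound on the perimeter-to-area ratio.

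First I would record, for $n$ large, the two consequences of \eqref{a1 E}: that $\int_{\Omega_n}\dist^p(x,\pd\Omega_n)\d x\le M$ with $M:=\frac{2\pi}{p^2+3p+2}+2\la+1$ (since the ratio term is nonnegative), and that $\H^1(\pd\Omega_n)/\H^2(\Omega_n)\le C_2$ by \eqref{a1-2}. Next I would establish a lower bound on the minimal width $w_n$ of $\Omega_n$. Writing $D_n$ for the diameter, a planar convex body satisfies $\H^2(\Omega_n)\le w_nD_n$ (it lies in a $w_n\times D_n$ box) and $\H^1(\pd\Omega_n)\ge 2D_n$ (the two boundary arcs joining a diametral pair each have length at least $D_n$). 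Hence $C_2\ge \H^1(\pd\Omega_n)/\H^2(\Omega_n)\ge 2D_n/(w_nD_n)=2/w_n$, so $w_n\ge 2/C_2=:w_0$. By the classical planar inequality between inradius and width (inradius $\ge$ width$/3$, with equality for the equilateral triangle), the inradius obeys $\rho_n\ge w_0/3=:\rho_0>0$.

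I would then convert the average-distance bound into an area bound by an erosion (shrinking) argument. Let $x_n$ be an incenter, so $B(x_n,\rho_n)\sse\Omega_n$. Each supporting half-space $\{\langle x-x_n,\nu\rangle\le h\}$ of $\Omega_n$ has $h\ge\rho_n$; thus for $y\in\Omega_n$ the point $z=x_n+\tfrac12(y-x_n)$ satisfies $h-\langle z-x_n,\nu\rangle\ge h/2\ge\rho_n/2$ for every supporting hyperplane, so $\dist(z,\pd\Omega_n)\ge\rho_n/2\ge\rho_0/2$. Consequently the homothetic copy $x_n+\tfrac12(\Omega_n-x_n)$, which has area $\tfrac14\H^2(\Omega_n)$, lies at distance at least $\rho_0/2$ from $\pd\Omega_n$, giving
$$M\ge\int_{\Omega_n}\dist^p(x,\pd\Omega_n)\d x\ge\Big(\frac{\rho_0}{2}\Big)^p\cdot\frac14\,\H^2(\Omega_n),$$
whence $\H^2(\Omega_n)\le 4M(2/\rho_0)^p$. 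Combining with $\H^1(\pd\Omega_n)\le C_2\H^2(\Omega_n)$ yields $\H^1(\pd\Omega_n)\le 4MC_2(2/\rho_0)^p=:C_3$, which is \eqref{p-1}.

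The main obstacle is the area bound: one has to recognize that jointly controlling the average-distance term and the ratio is precisely what excludes the thin-and-long degeneration, and then make this quantitative. The two geometric inputs carry the argument: the ratio bound forces a uniform lower bound on the width (hence on the inradius via the width/inradius inequality), so the sets cannot be too thin, while the average-distance bound then forces a uniform upper bound on the area, so they cannot be too long. Everything else is routine bookkeeping of constants depending only on $p$ and $\la$.
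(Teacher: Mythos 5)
Your proof is correct, and it takes a genuinely different route from the paper's. The paper first proves a universal, scale-invariant inequality valid for \emph{every} convex body, namely \eqref{p claim1}: $\int_\Omega \dist^p(x,\pd \Omega)\d x \geq C\, \H^2(\Omega)^{p+1}/\H^1(\pd \Omega)^{p}$ with $C=3^{-p}2^{-p-4}$, built by taking a diametral pair $A,B$, the two triangles $\triangle AP_iB$ of maximal height on either side of the diameter, and their incircles; combining this inequality with the ratio bound \eqref{a1-2} and the energy bound then gives $\inf \E + 1 \geq C C_2^{-p-1}\H^1(\pd\Omega_n)$ in one stroke, with no need to bound the area. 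You instead thread the ratio bound through the geometry from the start: it forces a width lower bound $w_n \geq 2/C_2$, the classical planar inequality (width $\leq 3\cdot$ inradius) converts this into an inradius lower bound, the half-homothety about the incenter converts the average-distance bound into an \emph{area upper bound}, and the ratio bound is used a second time to turn that into the perimeter bound. Interestingly, both arguments pivot on the same half-homothety trick to get a pointwise lower bound on $\dist(\cdot,\pd\Omega_n)$ -- the paper applies it to the incircles of its two triangles (proving the needed inradius bound $r_i \geq h_i/3$ by hand), while you apply it to the inball of the whole body, outsourcing the inradius control to the classical width--inradius inequality. What the paper's route buys: \eqref{p claim1} is independent of the minimizing sequence and of $\la$, is highlighted in Remark \ref{remark-new} as of independent interest, and its construction is reused later (the nondegeneracy argument in Lemma \ref{existence of limit convex set} and the closing remarks on the generalized energy \eqref{generalized energy}). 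What your route buys: it is more modular, and it yields as a byproduct an explicit area upper bound $\H^2(\Omega_n) \leq 4M(2/\rho_0)^p$ for minimizing sequences, which the paper never states; the price is that all your intermediate estimates are tied to the sequence through $C_2$, so they do not produce a sequence-free functional inequality of the type \eqref{p claim1}.
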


\begin{proof} We first claim that
for any $\Omega \in {\mathcal A}$ it holds
\begin{equation}\label{p claim1}
\int_{\Omega} \dist^p(x,\pd \Omega)\d x \geq C \frac{\H^2(\Omega)^{p+1}}{\H^1(\pd \Omega)^{p}},
\qquad C= 3^{-p}   2^{-p-4} .
\end{equation}

 Consider an arbitrary $\Omega \in {\mathcal A}$.  Let $A,B\in \pd \om$ be two points realizing
 $D:=|A-B| =\diam \om $. Let $\sm_i$, $i=1,2$ be the 
 lines (see Figure \ref{construction upper bound on perimeter}) orthogonal to
  the line segment
 between $A$ and $B$ (which we denote by $ \llbracket A,B \rrbracket$).
  Since $\om$ is convex, and $|A-B| =\diam \om $, $\om$
 is entirely contained in the region between $\sm_1$ and $\sm_2$.
Then let $P_i$, $i=1,2$ be the points on $\pd\om$ such that 
the triangles $\triangle A P_i B$ have maximal areas. As $\om$ is convex, we have
\[ \H^2( \om) \le D(h_1+h_2),\qquad h_i := \dist( P_i, \llbracket A,B \rrbracket ).\]
On the other hand, $\H^2(\triangle A P_i B) = Dh_i/2$, hence
\[ \frac{\H^2(\triangle A P_1 B \cup \triangle A P_2 B )}{\H^2(\om)} \ge \frac{1}{2}. \]

 \begin{figure}[ht]
 \centering
 	  	\includegraphics[scale=0.6]{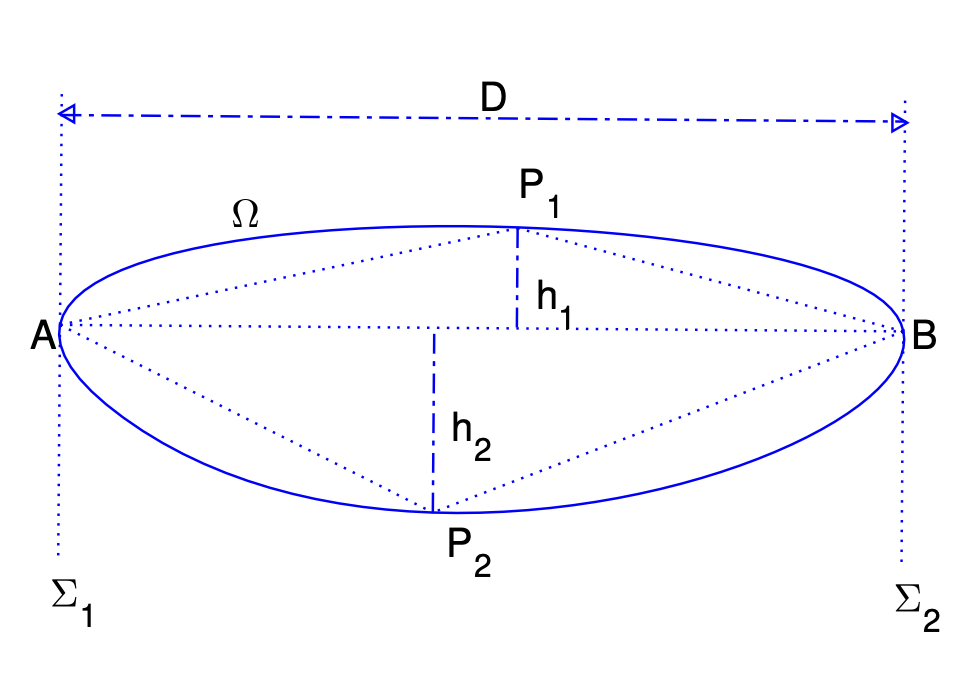}
 	\caption{ A schematic representation of the construction. The points $P_1$ and
 	(resp.  $P_2$) are the points on $\pd\om$ above (resp. below) 
 	the segment $\llbracket A,B \rrbracket$ furthest away from $\llbracket A,B \rrbracket$. }
 	\label{construction upper bound on perimeter}
 \end{figure}

Now we do the following construction: let $O_i$
(resp. $r_i$)
 be the incenter (resp. inradius) of $\triangle A P_i B $, $i=1,2$. Denote by 
 $\~A$ (resp. $\~P_i$, $\~B$) the midpoints of the line segments between
 $O_i$ and $A$ (resp. $P_i$, $B$) -- see Figure \ref{construction lower bound on average-distance}.

  \begin{figure}[ht]
  \centering
  	  	\includegraphics[scale=.6]{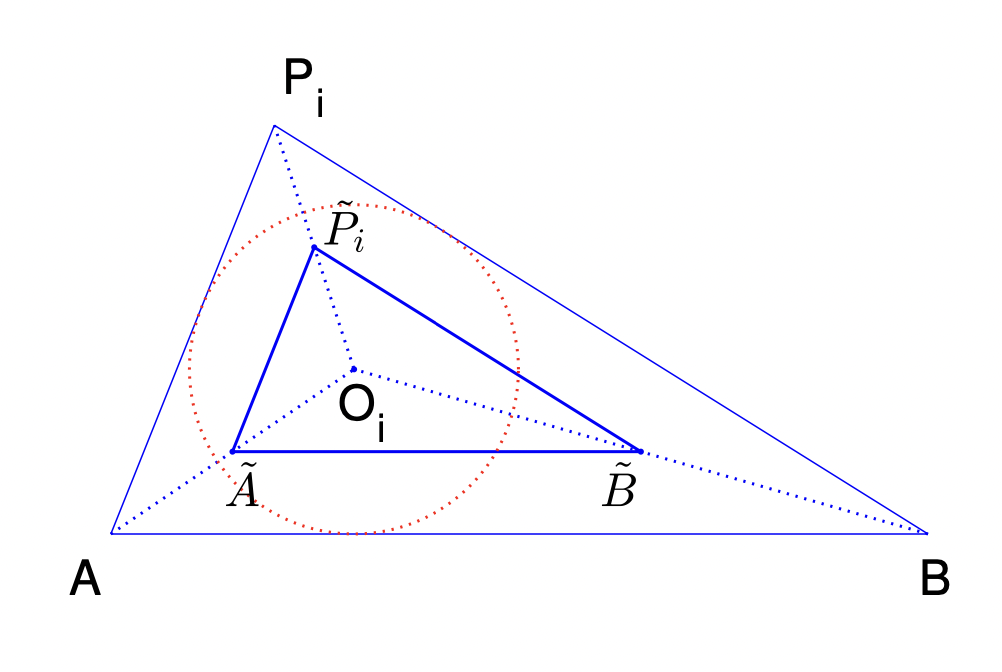}
 	\caption{A schematic representation of the construction. The points $\~A$, $\~P_i$,
 	$\~B$ are the midpoints of the segments $\llbracket O_i,A \rrbracket$,
 $\llbracket O_i,P_i \rrbracket$, $\llbracket O_i,B \rrbracket$ respectively. The red dotted circle
is the incircle of the triangle $\triangle AP_iB$.}
 	\label{construction lower bound on average-distance}
 \end{figure}

  Clearly, $\triangle \~A \~P_i\~B$ is a rescaled copy
 of $\triangle A P_i B $, with area $\H^2(\triangle A P_i B )/4$.
As 
\[ \text{inradius} = \frac{2 \text{Area}}{\text{Perimeter}}, \]
we can estimate $r_i$ as follows:
 \begin{align}
 r_i & = \frac{D h_i}{ D + |A-P_i|+|B-P_i| } \ge \frac{Dh_i}{3D} = \frac{h_i}{3}
 \label{inradius lower bound}
 \end{align}
 since by definition we have $D=\diam \om \ge |A-P_i|, |B-P_i| $.
  Then, noting that 
  \[  \dist(x,\pd \om) \ge \dist(x,\pd A P_i B) \ge \frac{1}{2} \dist(O_i, \pd A P_i B) \ge \frac{1}{2} r_i \]
 for all $x\in  \triangle \~A \~P_i\~B, \; i=1,2, $ 
  we have
 \begin{align}
 \int_\om \dist^p ( x,\pd\om )\d x&\ge 
\sum_{i=1}^{2}  \int_{\triangle \~A \~P_i\~B} \dist^p ( x,\pd\om )\d x\notag\\
&\ge \sum_{i=1}^{2} 2^{-p} r_i^p \H^2(\triangle \~A \~P_i\~B)
= \sum_{i=1}^{2} 2^{-p-2} r_i^p \H^2(\triangle A P_i B)\notag\\
&\ge 
\sum_{i=1}^{2} 3^{-p} 2^{-p-3} h_i^{p+1} D
\ge 3^{-p} 2^{-p-3} D \cdot \max_{i=1,2} h_i^{p+1} .
\label{lowe bound on average-distance}
 \end{align}
Recalling that $\H^2(\Omega) \le D(h_1+h_2)$, $ \H^1(\pd \Omega) \ge 2D $,
we get
\[ 
\frac{\H^2(\Omega)^{p+1}}{\H^1(\pd \Omega)^{p}} \le \frac{D^{p+1}(h_1+h_2)^{p+1}}{ (2D)^p}
= 2^{-p} D(h_1+h_2)^{p+1} \le 2 D   \cdot \max_{i=1,2} h_i^{p+1}.\]
Hence \eqref{lowe bound on average-distance} gives
\begin{align*}
 \int_\om \dist^p ( x,\pd\om )\d x&\ge 
C\frac{\H^2(\Omega)^{p+1}}{\H^1(\pd \Omega)^{p}},\qquad C= 3^{-p}  2^{-p-4},
\end{align*}
and \eqref{p claim1} is proven. From \eqref{a1-2} we know that
\[ \frac{\H^1(\pd \Omega_n)}{\H^2(\Omega_n)}  \le C_2 \Lra 
\frac{\H^2(\Omega_n)^{p+1}}{\H^1(\pd \Omega_n)^{p+1}} \ge C_2^{-p-1}, \]
so the above inequality gives
\begin{align*}
\int_\om \dist^p ( x,\pd\om_n )\d x&\ge 
C\frac{\H^2(\Omega_n)^{p+1}}{\H^1(\pd \Omega_n)^{p+1}}\H^1(\pd \Omega_n)
\ge CC_2^{-p-1}\H^1(\pd \Omega_n).
\end{align*}
Now, any minimizing sequence $\{\om_n\}$ is such that, for all sufficiently large $n$,
\[  \E(\om_n) \le \inf \E +1, \]
thus
\begin{align*}
\inf \E+1 & \ge  \E(\om_n) \ge \int_{\om_n} \dist^p ( x,\pd\om_n )\d x
\ge CC_2^{-p-1}\H^1(\pd \Omega_n),
\end{align*}
\eqref{inf E} shows that  $\inf \E<+\8$, completing the proof.

\end{proof}

\begin{remark}\label{remark-new}
{\it We note that it is an interesting geometric question by itself to study what the optimal constant $C$ for the inequality \eqref{p claim1}. Furthermore, one may ask if the form of the inequality is optimal. That is, one may ask, given $\H^2(\Omega)$ and $\H^1(\pd \Omega)$, what is the minimum of  $\int_\Omega \dist^p(x,\pd \Omega)\d x$, which is a constrained optimization problem related to the one considered in this work.
}
\end{remark}

\section{Existence} \label{sec:existence}

In this section we will prove that the $E_{p,\la}$ admits a minimizer in $\A$.
As our arguments rely on a lower semicontinuity result, namely Lemma \ref{basic}
below, we need first to introduce a metric on $\A$.

\bigskip

For any $\Omega_1, \Omega_2 \in {\mathcal {A}}$, define
\begin{eqnarray} \label{d}
d(\Omega_1, \Omega_2) := {\mathcal{H}}^2 (\Omega_1 \triangle \Omega_2),
\end{eqnarray}
where $\triangle$ denotes the symmetric difference.
Set
$$\-{\mathcal A}:= \text{completion of } {\mathcal A} \text{ with respect to } d.$$ 
Before we can proceed, we need to characterize the elements of $\-\A \setminus \A$:
we cannot exclude a priori that an element $\om\in \-\A$ can be quite irregular:
\begin{enumerate}
	\item $\om\in \-\A$ needs not to be closed: indeed it is very possible for a sequence
	of compact sets to converge to an open set in the metric $d$. For instance,
let  $\om_n $ be the closed ball of radius $1-1/n$ centered around the origin, then
it converges to the open ball, centered around the origin, of radius 1.

\item As we do not have any a priori bounds on the diameter
of elements of $\A$, a set
$\om\in \-\A$ needs not to be bounded.  

\item The distance $d$ is insensitive to perturbations on $\H^2$-negligible sets. Therefore,
we cannot exclude that $\-\A$ might contain 
 compact convex sets {\em up to $\H^2$-negligible sets}. Thus whether a generic element
 in $\-\A$ is convex or not is unclear.
\end{enumerate}
In view of the above mentioned issues, we cannot assume neither compactness, nor convexity,
for elements of $\-\A$. Our goal is to show (see Lemma \ref{existence of limit convex set}
below) that minimizing sequences must
converge to some element in $\A$.

\bigskip

The next result, from \cite{Slepcev}, will be crucial for our convergence arguments.

\begin{lemma}
	\label{convergence of curves}
	Consider a sequence of constant speed parameterized curves $\gamma_n :[0,1]\lra K$, where
	$K\sse \R^d$ is some compact set. Assume moreover that
	\begin{align}
	\sup_n L(\gm_n)<+\8,\qquad \sup_n \|\gm_n\|_{BV([0,1];\R^d)} <+\8,  
	\label{bounded length and BV norm}
	\end{align}
	where $\|\cdot\|_{BV([0,1];\R^d)}$ denotes the bounded variation norm. Then there exists a curve $\gamma:[0,1]\lra K$ such that:
	\begin{enumerate}
		\item $\gm_n\to \gm$ in $C^\al([0,1] ;\R^d )$ for all $\al\in [0,1)$,
		\item $\gm_n'\to \gm'$ in $L^p(0,1;\R^d)$ for all $p<+\8$,
		\item $\gm_n''\rhu \gm''$ weakly as measures.
	\end{enumerate}
\end{lemma}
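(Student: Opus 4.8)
The plan is to combine Arzel\`a--Ascoli compactness with the two uniform bounds, extract a single subsequence along which all three convergences hold, and then upgrade the convergences to the full ranges of $\alpha$ and $p$ by interpolation. First I would record that, since each $\gamma_n$ is parameterized by constant speed on $[0,1]$, its velocity satisfies $|\gamma_n'(t)|\equiv L(\gamma_n)$ for a.e.\ $t$; the uniform length bound $\sup_n L(\gamma_n)=:M<+\8$ therefore makes the family equi-Lipschitz, with $|\gamma_n(s)-\gamma_n(t)|\le M|s-t|$. As the curves take values in the compact set $K$, they are also uniformly bounded, so Arzel\`a--Ascoli yields a subsequence converging uniformly to some $\gamma:[0,1]\lra K$ (the inclusion into $K$ survives the limit because $K$ is closed). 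To pass from $C^0$ to $C^\alpha$ for every $\alpha\in[0,1)$, I would apply the interpolation inequality for H\"older seminorms to the differences $\gamma_n-\gamma$: these remain equi-Lipschitz while tending to $0$ uniformly, whence $[\gamma_n-\gamma]_{C^{0,\alpha}}\lesssim \|\gamma_n-\gamma\|_\infty^{1-\alpha}(2M)^{\alpha}\to 0$. This gives assertion (1) along the subsequence, for all $\alpha<1$ at once; note that this part uses only the length bound.

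For assertion (2) I would use the uniform bound on $\|\gamma_n\|_{BV}$, which (read correctly) controls the total variation of the velocities $\gamma_n'$. Since $\{\gamma_n'\}$ is bounded in $BV([0,1];\R^d)$ and uniformly bounded in $L^\infty$ (by $M$), the compact embedding $BV\hookrightarrow L^1$ (Helly's selection theorem) furnishes a further subsequence with $\gamma_n'\to g$ in $L^1$ and a.e. Identifying the limit is immediate: writing $\gamma_n(t)=\gamma_n(0)+\int_0^t\gamma_n'\,\d s$ and letting $n\to\8$ shows $\gamma(t)=\gamma(0)+\int_0^t g\,\d s$, so $g=\gamma'$ a.e. The upgrade to $L^p$ for every $p<+\8$ is then automatic from the uniform $L^\infty$ bound: since $|\gamma_n'-\gamma'|\le 2M$ a.e., one has $\|\gamma_n'-\gamma'\|_{L^p}^p\le (2M)^{p-1}\|\gamma_n'-\gamma'\|_{L^1}\to 0$.

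Finally, for assertion (3), the same bound says that the distributional second derivatives $\gamma_n''$ are $\R^d$-valued finite measures with $\sup_n |\gamma_n''|([0,1])<+\8$. By weak-$*$ compactness of bounded sequences of measures (Banach--Alaoglu together with the Riesz representation theorem), a subsequence satisfies $\gamma_n''\rhu \mu$ for some measure $\mu$. To see that $\mu=\gamma''$, I would test against $\varphi\in C_c^\infty(0,1)$ and integrate by parts twice, $\int_0^1 \varphi\,\d\gamma_n''=\int_0^1 \gamma_n\,\varphi''\,\d t$, and use the uniform convergence $\gamma_n\to\gamma$ to pass to the limit, obtaining $\int_0^1\varphi\,\d\mu=\int_0^1\gamma\,\varphi''\,\d t=\langle\gamma'',\varphi\rangle$. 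Hence $\mu=\gamma''$, giving (3). Extracting subsequences successively and relabelling produces one subsequence realizing all three conclusions. The only genuinely delicate points are the correct reading of the hypothesis (that it is the total variation of the velocity, equivalently the curvature measure, that is bounded) and the identification of the weak-$*$ limit with the distributional derivative of the \emph{same} $\gamma$; everything else is standard compactness and interpolation. I expect this mutual consistency of the three limit objects $\gamma$, $\gamma'$, $\gamma''$ — rather than their arising as three unrelated weak limits — to be the main conceptual obstacle.
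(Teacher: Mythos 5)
Your proposal is correct, but there is nothing in the paper to compare it against: the paper does not prove Lemma \ref{convergence of curves} at all, it imports the statement from the cited reference \cite{Slepcev}. Your argument --- Arzel\`a--Ascoli plus H\"older interpolation for (1), Helly/BV compactness plus identification of the limit via the fundamental theorem of calculus for (2), and weak-$*$ compactness of measures plus integration by parts against $C_c^\infty(0,1)$ test functions for (3) --- is the standard compactness route, and is essentially the proof found in that reference; the consistency of the three limits $\gamma$, $\gamma'$, $\gamma''$ is handled exactly as it should be, by identifying each weak limit with a derivative of the \emph{same} uniform limit $\gamma$.

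Two caveats are worth making explicit, both of which you at least partially flag. First, the hypothesis as printed, $\sup_n\|\gm_n\|_{BV([0,1];\R^d)}<+\8$, is automatically satisfied by constant-speed curves of bounded length in a compact set, and under that literal reading conclusions (2) and (3) are false (zigzag curves of bounded length converge uniformly while their derivatives oscillate without converging in $L^p$); your re-reading of the hypothesis as a bound on $\|\gm_n'\|_{BV}$, i.e.\ on the total curvature measure, is the correct one and is what the cited lemma actually assumes. Second, the conclusions can only hold along a subsequence --- alternating between two distinct curves satisfies all hypotheses but yields no limit --- and your proof accordingly extracts and relabels subsequences; this subsequence version is all the paper ever uses. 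A final minor point: testing against $\varphi\in C_c^\infty(0,1)$ identifies the weak-$*$ limit $\mu$ of $\gm_n''$ only on the open interval, so in principle mass could concentrate at the endpoints $\{0,1\}$; this is harmless for the paper's purposes (the curves there are closed, so one may work periodically), but strictly speaking your identification yields that $\mu$ coincides with $\gamma''$ on $(0,1)$.
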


\begin{remark}\label{remark 1}
We remark that this convergence result is quite strong: consider a sequence $\{\om_n\}\sse \A$
and let $\gm_n$ be constant speed parameterizations of $\pd\om_n$. Note that $\gm_n$
are all closed curves. 
Assume that we are under the hypotheses of Lemma \ref{convergence of curves}, hence
there exists $\gm:[0,1]\lra K$ such that $\gm_n\to \gm$ in 
$C^\al([0,1] ;\R^d )$ for all $\al\in [0,1)$. In particular, we can define $\om$
to be the bounded region delimited by the graph of $\gm$, and we have the uniform convergence
of the boundaries, which in turn gives $d_\H (\pd\om_n,\pd\om)\to 0$. Here $d_\H$ denotes
the Hausdorff distance
\[d_\H(X,Y):=\max \bigg\{ \sup_{x\in X}\dist(x,Y) ,\sup_{y\in Y}\dist(y,X) \bigg\}. \]
Such strong convergence also implies that the characteristic functions $\chi_{\om_n}$
converge to $\chi_{\om}$ in $L^p$, $p\in [1,+\8)$, since 
\begin{align*}
\|\chi_{\om_n}-\chi_{\om}\|_{L^p(\R^d
	)}^p \le \H^2 (\om_n \triangle \om  )
\le \max\Big\{ \sup_n L(\gm_n),L(\gm) \Big \} \cdot d_\H (\pd\om_n,\pd\om)\to 0.
\end{align*}
\end{remark}

\begin{lemma}
	Consider a minimizing sequence 
	$\om_n \sse \A$, then there exists $\om\in\A$, and a sequence $x_n \sse \mathbb{R}^n$ such that $\om_n + x_n \to \om$ in the metric $d$.
	\label{existence of limit convex set}
\end{lemma}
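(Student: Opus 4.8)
The plan is to extract a convergent subsequence of (suitably translated) boundary parameterizations using Lemma \ref{convergence of curves}, and then show that the limit curve bounds a genuine element of $\A$. First I would reduce to a situation where the compactness hypotheses of Lemma \ref{convergence of curves} hold. By Lemma \ref{p} we have a uniform perimeter bound $\H^1(\pd\om_n)\le C_3$ for all large $n$, so letting $\gm_n:[0,1]\lra\R^2$ be constant-speed parameterizations of $\pd\om_n$, each has length $L(\gm_n)=\H^1(\pd\om_n)\le C_3$, giving the first half of \eqref{bounded length and BV norm}. Since $\om_n$ is convex, its boundary has the crucial feature that the tangent direction is monotone (the curve turns only in one sense), so the total variation of $\gm_n'$ equals the total turning, which for a convex curve is exactly $2\pi$; combined with the uniform length bound this yields a uniform bound on $\|\gm_n\|_{BV([0,1];\R^2)}$, supplying the second half of \eqref{bounded length and BV norm}.

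The translations $x_n$ are needed because $\A$ carries no diameter or location bound, so I would normalize by translating each $\om_n$ so that, say, its centroid or a chosen boundary point sits at the origin; after this translation the diameter is controlled (the diameter $D_n$ satisfies $\H^1(\pd\om_n)\ge 2D_n$, hence $D_n\le C_3/2$), so all the translated curves take values in a single fixed compact set $K=\overline{B(0,C_3)}$. Lemma \ref{convergence of curves} then produces a limit curve $\gm:[0,1]\lra K$ with $\gm_n\to\gm$ in $C^\al$, and as noted in Remark \ref{remark 1} this gives Hausdorff convergence of boundaries and, crucially, $L^p$ convergence of $\chi_{\om_n+x_n}$ to $\chi_\om$, where $\om$ is the region enclosed by $\gm$; in particular $d(\om_n+x_n,\om)=\H^2((\om_n+x_n)\triangle\om)\to 0$.

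The remaining and most delicate point is to verify that $\om\in\A$, i.e.\ that the limit is genuinely compact, convex and Hausdorff two-dimensional, thereby escaping the pathologies (1)--(3) listed for $\-\A$. Compactness and boundedness are immediate since $\gm$ takes values in the compact $K$ and $\om$ is the closed bounded region it encloses. Convexity I would obtain as a closed property under this strong convergence: each $\om_n+x_n$ is convex, and convexity passes to Hausdorff limits of boundaries (equivalently, uniform limits of convex-set indicator functions are convex up to null sets, but here the $C^\al$ boundary convergence makes $\om$ honestly convex). The genuinely two-dimensional (non-degenerate) property is where I expect the main obstacle: a priori the limit could collapse to a segment (Hausdorff dimension $1$, so $\H^2(\om)=0$), which is exactly pathology (3). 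Here I would invoke the area lower bound from Corollary \ref{a1}, namely $\H^2(\om_n)\ge C_1>0$ for all large $n$; since $\chi_{\om_n+x_n}\to\chi_\om$ in $L^1$ we get $\H^2(\om)=\lim\H^2(\om_n)\ge C_1>0$, ruling out degeneration and forcing $\om$ to have Hausdorff dimension $2$.

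Finally, I would record that these three ingredients together place $\om$ in $\A$ and that the displayed convergence $\om_n+x_n\to\om$ in the metric $d$ is precisely the $L^1$ convergence of indicators established above, completing the proof. The one technical subtlety worth flagging is ensuring that the region ``enclosed by $\gm$'' is well defined even if $\gm$ were non-injective in the limit; the convexity and the nonvanishing enclosed area together guarantee $\gm$ is a simple closed curve bounding $\om$, so no ambiguity arises.
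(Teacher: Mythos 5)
Your proposal is correct and follows the same overall skeleton as the paper: translate so that a marked point of each boundary sits at the origin, use Lemma \ref{p} to verify the hypotheses \eqref{bounded length and BV norm}, apply Lemma \ref{convergence of curves} to get a limit curve, define $\om$ as the enclosed region, and then verify compactness, convexity, and two-dimensionality. The genuine divergence is in the last step. The paper proves $\dim_\H\om=2$ geometrically, by re-using the construction inside the proof of Lemma \ref{p}: each $\om_n$ contains a triangle $\triangle \~A\~P_i\~B$ whose inradius is bounded below by $C_1/(6C_3)$ uniformly in $n$ (via $\sum_i h_{i,n}\ge \H^2(\om_n)/\diam\om_n\ge C_1/C_3$), so the limit inherits a nondegenerate triangle. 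You instead combine the $L^1$ convergence of characteristic functions (Remark \ref{remark 1}) with the area lower bound $\H^2(\om_n)\ge C_1$ of Corollary \ref{a1} to get $\H^2(\om)=\lim_n \H^2(\om_n)\ge C_1>0$ directly; this is simpler, avoids re-opening the proof of Lemma \ref{p}, and is not circular since the indicator convergence comes from the uniform convergence of the parameterizations, not from Lemma \ref{basic}. Two smaller differences: for the BV bound you invoke the total turning ($2\pi$ per convex closed curve, hence total variation of $\gm_n'$ at most $2\pi L(\gm_n)$), whereas the paper only uses that constant-speed parameterizations are uniformly Lipschitz --- your version is actually the one needed if the hypothesis is read as requiring $\gm_n'\in BV$, which conclusion (3) of Lemma \ref{convergence of curves} implicitly presupposes; and you assert convexity of the limit as a standard closedness property under Hausdorff convergence, where the paper writes out the elementary argument with sequences $P_n\to P$, $Q_n\to Q$ and $(1-t)P_n+tQ_n\in\om_n$ --- asserting the standard fact is acceptable, but be aware the paper makes this explicit.
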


Note that, since our energy is translation invariant, the above convergence result is sufficient for our purposes.

\begin{proof}
	In this proof it is more convenient to work with constant speed, 
		instead of arc-length,
	parameterizations.
	
	Consider minimizing sequence 
	$\{\om_n\}\sse \A$, and
	let $\vph_n:[0,1]\lra \pd \om_n$ be constant speed parameterizations. Note all
	$\pd\om_n$ are closed curves, and as $\E$ is translation invariant,
	we can replace $\Omega_n$ with translated copies (which, for brevity, we still denote by $\Omega_n$, and by $\varphi_n$ the parameterization of $\partial \Omega_n$) such that $\varphi_n(0)=\varphi_n(1)=0$.
	We show that we are under the conditions \eqref{bounded length and BV norm}: first, 
	the upper bound on the perimeter \eqref{p-1} and $\Omega_n \subseteq \mathbb{R}^2$ ensures all $\om_n$
	are contained in some compact set $K$.
	As the curves $\vph_n$ are parameterized by constant speed,
	we have $\|\vph_n'\| = L(\vph_n) =\H^1(\pd\om_n)$ a.e. Then, in view of Lemma \ref{p}, we infer
	\eqref{bounded length and BV norm}. Therefore there exists a limit curve
	$\vph:[0,1]\lra K$ such that the convergences in Lemma \ref{convergence of curves} hold.
	Since 
	$\vph_n(0)=\vph_n(1)=0$ for all $n$, we get $\vph(0)=\vph(1)=0$ too. We define $\om$
	to be the {\em bounded} area delimited by $\vph$, and the graph of $\vph$ turn out to be
	$\pd\om$. By construction, $\om$ is compact. 
	
	\medskip
	
	We need to check it is convex:
	consider arbitrary $P,Q\in \om$, $t\in (0,1)$, and we show that $(1-t)P+tQ\in \om$.
	Consider sequences $P_n,Q_n\in \om_n$ such that $P_n\to P$, $Q_n\to Q$: since
	each $\om_n$ is convex, $(1-t)P_n+tQ_n\in \om_n$. By Lemma \ref{convergence of curves},
	we know $\|\vph_n-\vph\|_{C^0([0,1];\R^2)} \to 0$. As a consequence, 
	\[ d_\H (\pd\om_n,\pd\om)\to 0 \]
	too,
	This allows us to choose, for each $n$, another point $z_n\in \om$ such that
	$|z_n- (   (1-t)P_n+tQ_n ) | \le  d_\H (\pd\om_n,\pd\om)$. By construction, now
	the sequences
	$(1-t)P_n+tQ_n$ and $z_n$ have the same limit. As
	$(1-t)P_n+tQ_n \to (1-t)P+tQ$, and $z_n\to z$, hence $z=(1-t)P+tQ$, using the compactness
	of $\om$ finally gives $z\in \om$.

	\medskip
	
	Finally, we check that $\dim_\H\om=2$. Since the ambient space $\R^2$ has already
	Hausdorff dimension two, it suffices to show that $\om$ contains a set of Hausdorff dimension
	two. For each $n$, we can use the construction from the proof of Lemma \ref{p} on each $\om_n$:
	we showed the existence of triangles $T_i:=\triangle \~A\~P_i\~B$
	(see Figure \ref{construction upper bound on perimeter}) whose distance to the boundary is at least
	$r_i/2$, with $r_i$ being the incenter which satisfied $r_i\ge h_i/3$. 
	Now, since we showed in the proof of Lemma \ref{p} that
	\[ \sum_{i=1}^2 h_{i,n} \diam \om_n \ge \H^2(\om_n), \]
	and 
	$$\H^2(\om_n)\ge C_1, \qquad\diam \om_n\le  \H^1( \pd\om_n )\le C_3$$
	due to Corrollary \ref{a1}, we get
	\[ \sum_{i=1}^2 h_{i,n} \ge  \frac{\H^2(\om_n)}{\diam\om_n} \ge \frac{C_1}{C_3}>0.\]
	This shows that at least one of the triangles $T_{i,n}$, $i=1,2$, must be non degenerate
	since its inradius is bounded from below by
	\[  \max_{i=1,2} r_{i,n} \ge \max_{i=1,2} \frac{h_{i,n}}{3} \ge \frac{C_1}{6C_3}, \]
	and the proof is complete.
\end{proof}

Lemma \ref{existence of limit convex set} is of crucial importance: since we are interested
in the minimizers of $\E$, this allows us to reduce the minimization problem to 
$\A$, and neglect the highly irregular elements of $\-\A\setminus\A$.

\begin{lemma}\label{basic}
Given $p\geq 1$, $\la>0$, and a minimizing sequence $\Omega_n\sse {\mathcal A}$ converging to $ \Omega \in {\mathcal A}$ with respect to $d$, 
then it 
holds:
\begin{align}
\H^2(\Omega)&=\lim_{n\to+\8}\H^2(\Omega_n),\label{con1}\\
\H^1(\pd \Omega)&\leq \liminf_{n\to+\8}\H^1(\pd \Omega_n),\label{con2}\\
\int_{\Omega} \dist^p(x,\pd \Omega)\d x&=\lim_{n\to+\8}\int_{\Omega_n} \dist^p(x,\pd \Omega_n)\d x.\label{con3}
\end{align}
\end{lemma}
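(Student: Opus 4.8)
The plan is to treat the three assertions in increasing order of difficulty, reducing the only delicate one, \eqref{con3}, to Hausdorff convergence of the boundaries. For \eqref{con1} I would argue straight from the metric: since $\H^2(\Omega_n)-\H^2(\Omega)=\H^2(\Omega_n\setminus\Omega)-\H^2(\Omega\setminus\Omega_n)$, this difference is bounded in absolute value by $\H^2(\Omega_n\triangle\Omega)=d(\Omega_n,\Omega)\to 0$. For \eqref{con2} I would use lower semicontinuity of the perimeter in the sense of sets of finite perimeter: by definition $d(\Omega_n,\Omega)=\|\chi_{\Omega_n}-\chi_{\Omega}\|_{L^1(\R^2)}\to 0$, the total-variation functional $\om\mapsto|D\chi_{\om}|(\R^2)$ is lower semicontinuous under $L^1$ convergence, and for convex bodies one has $|D\chi_{\om}|(\R^2)=\H^1(\pd\om)$; combining these yields $\H^1(\pd\Omega)\le\liminf_n\H^1(\pd\Omega_n)$.

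The substance of the lemma is \eqref{con3}, and the first step I would take is to upgrade $d$-convergence to Hausdorff convergence of the boundaries, i.e.\ $d_\H(\pd\Omega_n,\pd\Omega)\to 0$ in the notation of Remark \ref{remark 1}. Because $\{\Omega_n\}$ is a minimizing sequence, Lemma \ref{p} bounds the perimeters by $C_3$, and the inequality $\H^1(\pd\Omega_n)\ge 2\diam\Omega_n$ (established in the proof of Lemma \ref{p}) makes the diameters uniformly bounded; since moreover $\H^2(\Omega_n\cap\Omega)\to\H^2(\Omega)>0$, all $\Omega_n$ are confined, for large $n$, to a common compact set $K$. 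I would then rerun the compactness argument of Lemma \ref{existence of limit convex set}: along an arbitrary subsequence, constant-speed parameterizations of $\pd\Omega_n$ verify the hypotheses \eqref{bounded length and BV norm} of Lemma \ref{convergence of curves}, so a further subsequence converges uniformly and the associated boundaries converge in $d_\H$ to $\pd\Omega'$ for some convex body $\Omega'$; by Remark \ref{remark 1} this forces $d(\Omega_n,\Omega')\to 0$ along that subsequence, and uniqueness of the $d$-limit gives $\Omega'=\Omega$. As every subsequence thus has a further subsequence along which $d_\H(\pd\Omega_n,\pd\Omega)\to 0$, the whole sequence converges. I expect this identification of the curve limit with the prescribed $d$-limit to be the main obstacle, as it is precisely where symmetric-difference convergence and Hausdorff convergence must be reconciled and where convexity together with the non-degeneracy bound $\H^2\ge C_1$ is indispensable.

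Granting $d_\H(\pd\Omega_n,\pd\Omega)\to 0$, I would conclude by a two-term splitting. The distance functions are $1$-Lipschitz with respect to $d_\H$, so $\sup_x|\dist(x,\pd\Omega_n)-\dist(x,\pd\Omega)|\le d_\H(\pd\Omega_n,\pd\Omega)$; since all distances are bounded by $M:=\diam K$ and $t\mapsto t^p$ is Lipschitz on $[0,M]$, the powers $\dist^p(\cdot,\pd\Omega_n)$ converge to $\dist^p(\cdot,\pd\Omega)$ uniformly on $K$. Writing
\begin{align*}
\Big|\int_{\Omega_n}\dist^p(x,\pd\Omega_n)\d x-\int_{\Omega}\dist^p(x,\pd\Omega)\d x\Big|
&\le\int_{\Omega_n}\big|\dist^p(x,\pd\Omega_n)-\dist^p(x,\pd\Omega)\big|\d x\\
&\quad+\Big|\int(\chi_{\Omega_n}-\chi_{\Omega})\,\dist^p(x,\pd\Omega)\d x\Big|,
\end{align*}
the first term is at most $\H^2(\Omega_n)\,\|\dist^p(\cdot,\pd\Omega_n)-\dist^p(\cdot,\pd\Omega)\|_{L^\8(K)}\to 0$ by uniform convergence and $\H^2(\Omega_n)\le M^2$, while the second is at most $M^p\,\H^2(\Omega_n\triangle\Omega)\to 0$. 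This gives \eqref{con3} and finishes the proof.
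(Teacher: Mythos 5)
Your proposal is correct, and for the parts it shares with the paper it follows the same route: \eqref{con1} straight from the definition of $d$, \eqref{con2} from lower semicontinuity of the total variation under the $L^1$ convergence $\chi_{\Omega_n}\to\chi_\Omega$, and \eqref{con3} by splitting the integrals and exploiting that distance functions are $1$-Lipschitz with respect to $d_\H$ of the boundaries (your Lipschitz bound for $t\mapsto t^p$ on $[0,M]$ is the same estimate the paper gets via the Mean Value theorem, and your two-term splitting versus the paper's three-term splitting over $\Omega_n\setminus\Omega$, $\Omega\setminus\Omega_n$, $\Omega_n\cap\Omega$ is a cosmetic difference). The genuine divergence is that the paper's proof of \eqref{con3} simply writes $d_\H(\pd\Omega_n,\pd\Omega)\to 0$ and uses it without justification, implicitly relying on the context of Lemma \ref{existence of limit convex set}, where the $d$-limit is constructed precisely as a uniform limit of boundary parameterizations; you instead prove this upgrade from $d$-convergence to Hausdorff convergence of boundaries inside the lemma, by a subsequence-of-subsequence compactness argument (perimeter and diameter bounds from Lemma \ref{p}, confinement to a compact $K$, Lemma \ref{convergence of curves}, Remark \ref{remark 1}, and uniqueness of the $d$-limit among non-degenerate convex bodies). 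What your version buys is that Lemma \ref{basic} becomes self-contained exactly as stated, with the only hypotheses being $d$-convergence and the minimizing-sequence bounds; the cost is re-invoking the curve-compactness machinery, which the paper avoids by leaning on the earlier construction. Note that your identification step does quietly use that $\H^2(\Omega\triangle\Omega')=0$ forces $\Omega=\Omega'$ for compact convex sets with nonempty interior (convex bodies are the closures of their interiors), which is worth a sentence but is not a gap.
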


\begin{proof}
Estimate \eqref{con1} follows from the definition of the metric $d$ and Remark \ref{remark 1}.

To prove \eqref{con2}, recall that 
the perimeter $\H^1(\pd \Omega_n)$ is the total variation of the characteristic function of $\Omega_n$.
Convergence $\Omega_n\to \Omega $ with respect
to $d$ implies (see Remark \ref{remark 1})  
$$ \chi_{\Omega_n}\to \chi_{\Omega} \text{ strongly in } L^1(\R^2),$$
with ``$\chi$'' denoting the characteristic function of the subscribed set.
Thus \eqref{con2} follows from the lower-semicontinuity of the total variation semi-norm.

 To prove \eqref{con3}, note that
\begin{align*}
\int_{\Omega_n} \dist^p(x,\pd \Omega_n)\d x & =\int_{\Omega_n\backslash \Omega} \dist^p(x,\pd \Omega_n)\d x+\int_{\Omega_n\cap \Omega} \dist^p(x,\pd \Omega_n)\d x\\
\int_\Omega \dist^p(x,\pd \Omega)\d x & =\int_{\Omega \backslash \Omega_n} \dist^p(x,\pd \Omega)\d x+\int_{\Omega_n\cap \Omega} \dist^p(x,\pd \Omega)\d x,
\end{align*}
hence
\begin{align}
\bigg|\int_{\Omega_n}& \dist^p(x,\pd \Omega_n)\d x-\int_{\Omega} \dist^p(x,\pd \Omega)\d x \bigg| \notag\\
&\leq \int_{\Omega_n\backslash \Omega} \dist^p(x,\pd \Omega_n)\d x+
\int_{\Omega \backslash \Omega_n} \dist^p(x,\pd \Omega )\d x \label{en-1}\\
&+\int_{\Omega_n\cap \Omega}| \dist^p(x,\pd \Omega_n)-\dist^p(x,\pd \Omega)|\d x.\label{en-2}
\end{align}
 By Lemma \ref{p},
 \[  \diam(\Omega_n) \leq  \H^1(\pd \Omega_n) \leq C_3.  \]
 According to \eqref{con2},
  \[  \diam(\Omega) \leq  \H^1(\pd \Omega) \leq \liminf_{n\to+\8}\H^1(\pd \Omega_n) \leq C_3.  \]
Therefore,
\begin{align*}
\int_{\Omega_n\backslash \Omega} \dist^p(x,\pd \Omega_n)\d x & \leq \H^2(\Omega_n\backslash \Omega)
( \diam(\Omega_n))^p \leq \H^2(\Omega_n\backslash \Omega) C_3^p \to 0,\\
\int_{\Omega \backslash \Omega_n} \dist^p(x,\pd \Omega)\d x &\leq \H^2(\Omega \backslash \Omega_n) (\diam( \Omega))^p
\leq \H^2(\Omega \backslash \Omega_n)  C_3^p \to 0,
\end{align*}
hence the sum in \eqref{en-1} goes to zero. To estimate \eqref{en-2}, 
denote by $d_\H$ the Hausdorff distance, and
note that,
by the Mean Value theorem, it holds
\begin{align*}
\int_{\Omega_n\cap \Omega}&| \dist^p(x,\pd \Omega_n)-\dist^p(x,\pd \Omega)|\d x\\
&\leq \int_{\Omega_n\cap \Omega}| \dist(x,\pd \Omega_n)-\dist(x,\pd \Omega)|\\
&\cdot p \sup_{x\in \Omega_n\cap \Omega}\Big(\max \{\dist(x,\pd \Omega_n),
\dist(x,\pd \Omega)\}\Big)^{p-1}\d x\\
&\leq \H^2(\Omega_n\cap \Omega) d_\H(\pd \Omega_n,\pd \Omega)\cdot p \Big(\max \{\diam \Omega_n,\diam \Omega\}\Big)^{p-1}\\
&\leq  \H^2(\Omega_n\cap \Omega)  \ d_\H(\pd \Omega_n,\pd \Omega)\cdot p \ C_3^{p-1}\to 0.
\end{align*}
Thus the term in \eqref{en-2} goes to zero too, and \eqref{con3} is proven.
\end{proof}

Now we prove part (1) of Theorem \ref{main}, i.e., the existence of minimizers in ${\mathcal A}$. 
\begin{lemma}\label{exist}
For any $p\geq1$, $\la>0$, the functional $\E$ admits a minimizer $\Omega \in {\mathcal A}$, which satisfies:
\begin{align*}
\H^2(\Omega) \geq C_1,\qquad   \H^1(\pd \Omega)&\leq C_3,
\end{align*}
with $C_1$ (resp. $C_3$) defined in \eqref{a1-1} (resp. \eqref{p-1}).
\end{lemma}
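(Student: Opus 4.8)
The plan is to apply the direct method of the calculus of variations, for which essentially all the ingredients have already been assembled. First I would fix a minimizing sequence $\{\Omega_n\}\sse \A$, so that $\E(\Omega_n)\to \inf_{\A}\E$, which is finite by \eqref{inf E}. Using the translation invariance of $\E$, Lemma \ref{existence of limit convex set} provides suitable translates (still a minimizing sequence, still denoted $\Omega_n$) together with a limit $\Omega\in\A$ such that $\Omega_n\to\Omega$ in the metric $d$. The task then reduces to showing that $\Omega$ is a minimizer, i.e. $\E(\Omega)\le \inf_{\A}\E$, which combined with $\Omega\in\A$ forces equality.

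To establish this I would invoke Lemma \ref{basic}, which handles each term of $\E$ separately: the average-distance term and the area converge, while the perimeter is only lower semicontinuous. The delicate point is the penalization term $\la\,\H^1(\pd\Omega)/\H^2(\Omega)$, since only the ratio --- not the perimeter or the area individually --- is controlled. Here I would exploit that the denominators stay uniformly bounded away from zero: by \eqref{con1} we have $\H^2(\Omega_n)\to \H^2(\Omega)$, and by \eqref{a1-1} this limit is at least $C_1>0$. For nonnegative $a_n$ and $b_n\to b>0$ one has $\liminf_n (a_n/b_n) = (\liminf_n a_n)/b$, so combining \eqref{con1} and \eqref{con2} gives
\begin{align*}
\frac{\H^1(\pd\Omega)}{\H^2(\Omega)} \le \frac{\liminf_n \H^1(\pd\Omega_n)}{\lim_n \H^2(\Omega_n)} \le \liminf_n \frac{\H^1(\pd\Omega_n)}{\H^2(\Omega_n)}.
\end{align*}
Since \eqref{con3} shows the average-distance term converges, and adding a convergent sequence to a $\liminf$ preserves it, I obtain
\begin{align*}
\E(\Omega) \le \lim_n \int_{\Omega_n}\dist^p(x,\pd\Omega_n)\d x + \la \liminf_n \frac{\H^1(\pd\Omega_n)}{\H^2(\Omega_n)} \le \liminf_n \E(\Omega_n) = \inf_\A \E,
\end{align*}
so $\Omega$ is indeed a minimizer.

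Finally, the stated bounds are immediate: $\H^2(\Omega)=\lim_n \H^2(\Omega_n)\ge C_1$ by \eqref{a1-1}, and $\H^1(\pd\Omega)\le \liminf_n \H^1(\pd\Omega_n)\le C_3$ by Lemma \ref{p}. I expect the only genuine obstacle to be the lower semicontinuity of the ratio term; everything else is a routine application of the already-proven compactness (Lemma \ref{existence of limit convex set}) and semicontinuity (Lemma \ref{basic}) results. It is precisely the positive lower bound $C_1$ on the area --- forced by the isoperimetric estimate \eqref{a} --- that upgrades the merely lower semicontinuous perimeter into lower semicontinuity of the penalized ratio, which is the crux of the argument flagged in the introduction.
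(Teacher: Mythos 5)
Your proposal is correct and follows essentially the same route as the paper: extract a minimizing sequence, translate it via Lemma \ref{existence of limit convex set} to obtain a limit $\Omega\in\A$, apply the three statements of Lemma \ref{basic}, and use the strictly positive area limit from \eqref{a1-1} to upgrade perimeter lower semicontinuity into lower semicontinuity of the ratio. The only difference is cosmetic --- you spell out the elementary $\liminf$ computation for the quotient, which the paper leaves implicit when it cites Lemma \ref{basic} for \eqref{penLSC} --- so this is a faithful reconstruction of the paper's argument.
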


\begin{proof}

Corollary \ref{a1} gives
$ \H^2(\Omega_n) \geq C_1$ for any sufficiently large $n$,
 and Lemma \ref{basic} gives
\begin{equation}\label{AreaC0}
\H^2(\Omega)=\lim_{n\to+\8}\H^2(\Omega_n)\geq C_1.
\end{equation}
Based on  Lemma \ref{basic} and equation \eqref{p-1},
\begin{eqnarray} \label{PerLSC}
\H^1(\pd \Omega) \leq C_3.
\end{eqnarray}

Lemma \ref{basic} gives
\begin{equation}\label{penLSC}
\frac{\H^1(\pd \Omega)}{\H^2(\Omega)}\leq \liminf_{n\to+\8}\frac{\H^1(\pd \Omega_n) }{\H^2(\Omega_n)}.
\end{equation}
and
\begin{align}
\int_{\Omega} \dist^p(x,\pd \Omega)\d x=\lim_{n\to+\8}\int_{\Omega_n} \dist^p(x,\pd \Omega_n)\d x.\label{enC0}
\end{align}
 Combining \eqref{penLSC}
and \eqref{enC0} gives
\begin{align*}
\E(\Omega)&=\int_{\Omega} \dist^p(x,\pd \Omega)\d x +\la\frac{\H^1(\pd \Omega)}{\H^2(\Omega)}\\
&\leq \lim_{n\to+\8}\int_{\Omega_n} \dist^p(x,\pd \Omega_n)\d x+  \la \liminf_{n\to+\8}\ \frac{\H^1(\pd \Omega_n) }{\H^2(\Omega_n)} 
\leq\liminf_{n\to+\8}\E(\Omega_n) = \inf_{\-{\mathcal A}} \E,
\end{align*}
hence $\Omega$ is effectively a minimizer of $\E $ in $ \-{\mathcal A}$. Lemma \ref{existence of limit convex set} shows $\Omega \in {\mathcal A}$.

\end{proof}

\section{Regularity} \label{sec:proof}

Now we prove part (2) of Theorem \ref{main}. The proof will be split over Lemmas \ref{C1}
and \ref{C2}.
\begin{lemma}\label{cut}
Let $S$ be a compact, convex set, with Hausdorff dimension equal to 2. Let $w_1$, $w_2\in \pd S$ be arbitrary
distinct points, and let $\sigma$ be the segment with endpoints $w_1$ and $w_2$. Denoting by $S_1$
and $S_2$ the two connected components of $S\backslash \sigma$, then both $S_1$, $S_2$ are convex.
\end{lemma}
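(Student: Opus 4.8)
The plan is to prove this elementary convexity fact by reducing it to the definition: a set is convex precisely when it contains the segment joining any two of its points. I want to show that for $S_1$, the component of $S \setminus \sigma$ on one side of the chord $\sigma = \llbracket w_1, w_2 \rrbracket$, any two points $P, Q \in S_1$ satisfy $\llbracket P, Q \rrbracket \subseteq S_1$, and symmetrically for $S_2$. The chord $\sigma$ divides the plane into two closed half-planes $H_1$ and $H_2$ sharing the boundary line $\ell$ through $w_1, w_2$. The natural description is $S_1 = S \cap H_1$ and $S_2 = S \cap H_2$ (up to the shared segment $\sigma$), where each $H_i$ is a closed half-plane determined by $\ell$.

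The key step is to observe that $S_1$ is an intersection of two convex sets. First I would fix the line $\ell$ containing $w_1$ and $w_2$ and the two closed half-planes $H_1, H_2$ it determines, so that $H_1 \cap H_2 = \ell$ and $H_1 \cup H_2 = \R^2$. Then I would identify $S_1$ with $S \cap H_1$ (and $S_2$ with $S \cap H_2$), justifying that removing the open segment indeed splits $S$ into these two pieces. Since $S$ is convex by hypothesis and each half-plane $H_i$ is convex, the intersection $S \cap H_i$ is convex as the intersection of two convex sets. This immediately yields convexity of both $S_1$ and $S_2$.

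The main obstacle, and the only genuinely careful point, is verifying that the two connected components of $S \setminus \sigma$ really do coincide with $S \cap H_1$ and $S \cap H_2$, rather than something more subtle. Here I would use that $w_1, w_2 \in \pd S$ and that $S$ is two-dimensional and convex: the chord $\sigma$ lies inside $S$, its relative interior is in the interior of $S$, and $S \setminus \sigma$ splits into exactly the points of $S$ strictly on one side or the other of $\ell$. One must check that each $S \cap H_i$ is connected (which follows from convexity, hence path-connectedness) and that these are disjoint apart from $\sigma$; then they are precisely the two connected components. I expect the topological bookkeeping — confirming there are exactly two components and matching them to the half-plane intersections — to be the part requiring the most care, whereas the convexity conclusion itself is then immediate.

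A clean way to organize this would be to state explicitly that $S \cap H_1$ and $S \cap H_2$ are both convex, hence connected, their union is $S$, and their intersection is $S \cap \ell = \sigma$ (using convexity of $S$ to see that $S \cap \ell$ is exactly the chord $\sigma$). Removing $\sigma$ therefore disconnects $S$ into the two sets $(S \cap H_1) \setminus \sigma$ and $(S \cap H_2) \setminus \sigma$, which are the asserted components $S_1, S_2$. Since adding back the segment $\sigma$ to each preserves convexity, both $S_1$ and $S_2$ are convex, completing the proof.
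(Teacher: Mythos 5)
Your proposal is correct and is essentially the paper's own argument: the paper rotates coordinates so that $\sigma$ lies on the $y$-axis, identifies the two components as $S_1=S\cap\{x>0\}$ and $S_2=S\cap\{x<0\}$, and concludes convexity because any segment between points of $S_1$ lies in $S\cap\{x>0\}=S_1$ --- exactly your intersection-of-convex-sets idea, with the paper likewise asserting (rather than proving) the identification of the components with the half-plane pieces. One cosmetic caution: phrase the final step via open half-planes, i.e.\ $S_i=S\cap \ii H_i$ is convex as an intersection of convex sets (which you already established via $S\cap\ell=\sigma$), since as written your closing inference runs backwards --- convexity of $S_i\cup\sigma=S\cap H_i$ does not by itself imply convexity of $S_i$, as removing boundary points from a convex set can destroy convexity.
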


\begin{proof}
Endow $\R^2$ with a Cartesian coordinate system. Upon rotation and reflection, assume that $\sigma$
lies in the $y$-axis, and $S_1\sse \{x>0\}$, $S_2\sse \{x<0\}$. Clearly, given points
$u,v\in S_1$, the segment $\xi$ between $u$ and $v$ lies entirely in $S\cap \{x>0\}=S_1$, hence
$S_1$ is convex. The proof for $S_2$ is analogous.
\end{proof}

\begin{lemma}\label{C1}
($C^1$-regularity) For any $p\geq1$, $\la>0$, any minimizer of $\E$ is $C^1$-regular.
\end{lemma}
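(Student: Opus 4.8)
The plan is to establish $C^1$-regularity by ruling out corners on $\pd\Omega$ through a competitor argument. Suppose for contradiction that a minimizer $\Omega$ has a corner at some boundary point $w_0$, i.e.\ $\pd\Omega$ fails to admit a well-defined tangent line there. Since $\Omega$ is convex, at such a corner the two one-sided tangent directions differ by a genuine angle, and the supporting lines span a nondegenerate cone. I would choose two nearby boundary points $w_1, w_2$ on either side of $w_0$, let $\sigma$ be the chord joining them, and consider the competitor $\Omega' := \Omega \setminus T$, where $T$ is the small triangle-like ``cap'' cut off by $\sigma$ near the corner (by Lemma~\ref{cut}, slicing along a chord preserves convexity, so $\Omega'\in\A$). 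The strategy is to show that for $w_1,w_2$ close enough to $w_0$, cutting the corner strictly decreases $\E$, contradicting minimality.

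The key is a careful asymptotic accounting of how each term of $\E$ changes as we shrink the cap. Let $\ell := |w_1 - w_2|$ be the chord length and let $\delta$ denote the height of the cap (distance from $w_0$ to $\sigma$); because of the true corner, $\delta$ is comparable to $\ell$ (not to $\ell^2$ as it would be at a smooth point), so $\H^2(T) \sim \ell^2$ while the perimeter change is a \emph{decrease}: replacing the two boundary arcs near the corner by the straight chord $\sigma$ shortens the perimeter by an amount $\sim \ell$, since the arcs subtend a fixed angle. Thus I expect
\begin{align*}
\H^1(\pd\Omega') &= \H^1(\pd\Omega) - c_1 \ell + o(\ell),\\
\H^2(\Omega') &= \H^2(\Omega) - c_2 \ell^2 + o(\ell^2),
\end{align*}
with $c_1, c_2 > 0$ depending on the corner angle. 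The perimeter-to-area ratio then decreases to leading order by $\sim \ell$, and since $\H^2(\Omega)\ge C_1 > 0$ by Lemma~\ref{exist}, this gain in the ratio term is of order $\ell$ and is not destroyed by the denominator's change.

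The remaining piece is to control the average-distance term $\F$. Removing the cap affects $\dist^p(\cdot,\pd\Omega)$ in two ways: we lose the integral over $T$ itself (which is $O(\ell^2)$ in measure, and where distances are $O(\ell)$, so this loss is at most $O(\ell^{p+2})$, negligible), and we perturb $\dist(x,\pd\Omega)$ for points $x\in\Omega'$ near the chord, but this perturbation is pointwise bounded by $O(\ell)$ and localized to a region of measure $O(\ell)$, so its total contribution is $o(\ell)$, higher order than the perimeter gain. Therefore the decrease $\sim c_1\la\ell/\H^2(\Omega)$ in the penalization term dominates all other changes, giving $\E(\Omega') < \E(\Omega)$ for $\ell$ small. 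This contradiction forces $\pd\Omega$ to have a tangent at every point, i.e.\ $\Omega$ is $C^1$-regular.

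The main obstacle I anticipate is the rigorous bookkeeping of the average-distance term near the chord: one must confirm that the distance function of the new set does not change by more than $O(\ell)$ on a set of the correct (small) measure, and in particular that no faraway part of $\Omega$ sees a large change in its distance-to-boundary because of the excised cap. Convexity should help here, since the distance to $\pd\Omega$ is governed locally and the excision is a small, localized modification, but making the $o(\ell)$ estimate uniform and quantitative is the delicate point. Establishing the strict perimeter decrease $-c_1\ell$ at a true corner (as opposed to a smooth point, where it would only be $O(\ell^3)$) is what makes the argument close, and I would make that angle-dependent lower bound precise first.
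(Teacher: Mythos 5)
Your proposal is correct and is essentially the paper's own argument: assume a corner of opening angle $\al\neq\pi$, excise the small cap cut off by a short chord (Lemma~\ref{cut} keeping the competitor convex), and note that the perimeter decreases at \emph{first} order in the chord length because of the genuine angle, while the area changes only at second order, so the penalization term strictly drops and minimality is contradicted. The only substantive difference is that the step you flag as the main obstacle --- controlling the average-distance term --- is actually immediate and needs no asymptotic bookkeeping: since the competitor satisfies $\Omega'\subseteq\Omega$, for every $x\in\Omega'$ the largest disk centered at $x$ contained in $\Omega'$ is also contained in $\Omega$, hence $\dist(x,\pd \Omega')\le\dist(x,\pd \Omega)$ pointwise, and therefore
\[
\int_{\Omega'}\dist^p(x,\pd \Omega')\d x\;\le\;\int_{\Omega'}\dist^p(x,\pd \Omega)\d x\;\le\;\int_{\Omega}\dist^p(x,\pd \Omega)\d x ,
\]
which is exactly how the paper obtains \eqref{De}. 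Your quantitative alternative (pointwise change $O(\ell)$, supported in the orthogonal slab over the chord, whose measure is at most $\ell\cdot\diam\Omega=O(\ell)$ by Lemma~\ref{p}) is also valid and still closes the argument, since an $o(\ell)$ error cannot beat the first-order perimeter gain; it is simply less economical than the monotonicity observation.
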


\begin{proof}
Consider an arbitrary minimizer $\Omega \in {\mathcal A}$. 
Endow $\R^2$ with a polar coordinate system.
We parameterize $\pd \Omega$
by a closed Lipschitz curve
\begin{equation*}
\gm:[0,2\pi]\lra \pd \Omega.
\end{equation*}
The proof is achieved by a contradiction argument. Assume that $\Omega$ is not $C^1$-regular.
That is, $\gm$ is not $C^1$-regular at some point $ t_0$. 
Upon rotating the coordinates, we can also assume $t_0\in (0,2\pi)$.
Since $\Omega$ is convex,
both one-sided derivatives 
\begin{equation*}
l^-:=\lim_{t \to t_0^-}\gm'(t),\qquad l^+:=\lim_{t \to t_0^+}\gm'(t)
\end{equation*}
are well-defined \cite{AFP, LS}. Denote by $\al$ the angle between $l^-$ and $l^+$. Clearly, $\al\neq \pi$.
\begin{figure}[ht]
\begin{center}
\includegraphics[scale=0.6]{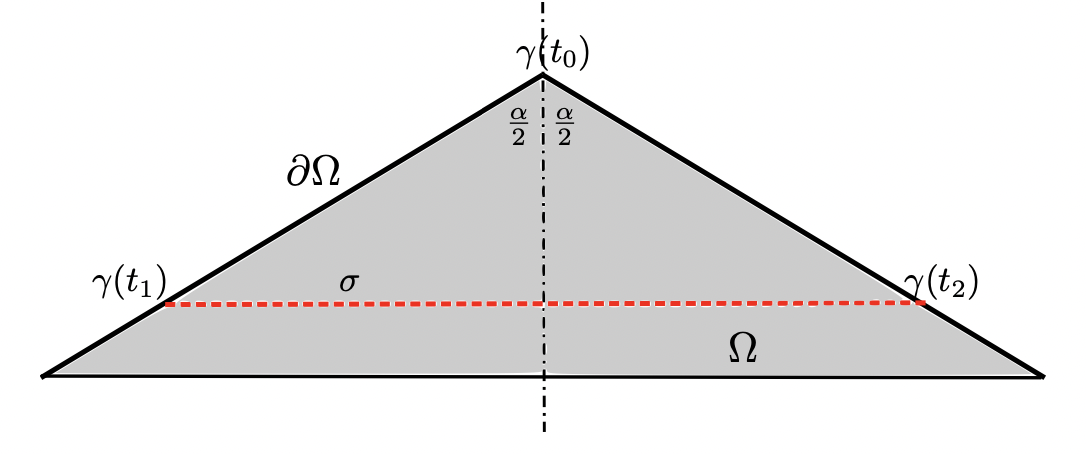}
\caption{\small{A schematic representation (near $\gm(t_0)$, in first order approximation in $\vep$) of the construction of $\Omega_\vep$.}}
\label{corner}
\end{center}
\end{figure}

Figure \ref{corner} is a representation (in first order approximation) of $\pd \Omega$
near $\gm(t_0)$.
For small parameters $0<\vep\ll1$, construct the competitor $\Omega_\vep$ as follows:
\begin{enumerate}
\item  Choose
 $t_1<t_0<t_2$ such that (in first order approximation in $\vep$)
 \begin{equation*}
\H^1(\gm([t_1,t_0]))=\H^1(\gm([t_0,t_2]))=\vep+O(\vep^2).
\end{equation*}

 \item Denote by 
 $$\sigma:=\{(1-s)\gm(t_1)+s\gm(t_2):s\in [0,1]\}$$ 
 the line segment between $\gm(t_1)$ and $\gm(t_2)$, and set
 \begin{equation}\label{comp}
L:=\big( \pd \Omega \backslash \gm([t_1,t_2])\big) \cup \sigma.
\end{equation}
Note that such $L$ is a convex Jordan curve, 
and denote by $\Omega_\vep$ the bounded region delimited by $L$.
\end{enumerate}
By construction, in first order approximation in $\vep$, it holds
\begin{align}
\H^1(\pd \Omega_\vep)&=\H^1(\pd \Omega)-2\vep(1- \sin(\al/2)) +O(\vep^2),\label{Dp}\\
\H^2(\Omega_\vep)&=\H^2(\Omega)-\frac{\vep^2\sin\al}2 +o(\vep^2)=\H^2(\Omega) +O(\vep^2).\label{Da}
\end{align}
Moreover, it is straightforward to show that
\begin{equation}\label{De}
\int_{\Omega_\vep} \dist^p(x,\pd \Omega_\vep)\d x\leq\int_{\Omega} \dist^p(x,\pd \Omega)\d x. 
\end{equation}
%

Recalling that $\H^2(\Omega)>0$ (since $\Omega$ is a minimizer), 
combining \eqref{Dp}, \eqref{Da} and \eqref{De} gives (in first order approximation in $\vep$)
\begin{align*}
\E(\Omega_\vep)&= \int_{\Omega_\vep} \dist^p(x,\pd \Omega_\vep) \ d x + \la \frac{\H^1(\pd \Omega_\vep)}{\H^2(\Omega_\vep)}\\
&\leq \int_{\Omega } \dist^p(x,\pd \Omega )\ d x + \la \frac{\H^1(\pd \Omega)-2\vep(1- \sin(\al/2)) +O(\vep^2)}{\H^2(\Omega) +O(\vep^2)}\\
&=\int_{\Omega} \dist^p(x,\pd \Omega)\d x + \la \frac{\H^1(\pd \Omega)}{\H^2(\Omega) } 
-\frac{2\la\vep(1- \sin(\al/2))}{\H^2(\Omega) } +O(\vep^2)\\
&=\E(\Omega)-\frac{2\la\vep(1- \sin(\al/2))}{\H^2(\Omega) } +O(\vep^2)\\
&=\min_{\mathcal A}\E-\frac{2\la\vep(1- \sin(\al/2))}{\H^2(\Omega) } +O(\vep^2),
\end{align*}
which is a contradiction for sufficiently small $\vep$. Thus $\Omega$ must be $C^1$-regular.
\end{proof}

\begin{lemma}\label{C2}
Given $p\geq1$, $\la>0$, a minimizer $\Omega$ of $\E$, let $\gm:[0,\H^1(\pd \Omega)]\lra \pd \Omega$ be 
an arc-length parameterization. Then it holds
 \begin{align}
\limsup_{h\to0}&\frac{|\gm(t+2h)-2\gm(t)+\gm(t-2h)|}{h^2} \leq  4C,\label{C11}
\end{align}
for any $t$, where $C$ is some constant depending only on $\la$ and $p$ (and independent of $\Omega$). 
\end{lemma}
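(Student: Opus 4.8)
The plan is to establish the $C^{1,1}$ bound \eqref{C11} by a competitor argument analogous to the one used in Lemma \ref{C1}, but now exploiting the second-order discrete curvature quantity $|\gm(t+2h)-2\gm(t)+\gm(t-2h)|$. The key observation is that for a $C^1$ convex curve (which we already have from Lemma \ref{C1}), the quantity in \eqref{C11} controls how sharply the boundary turns over a window of scale $h$. If this quantity were too large at some point $t$, the boundary would exhibit excessive local curvature there, and we could construct a competitor $\Omega_\vep$ by chopping off a thin sliver near $\gm(t)$ — replacing the arc $\gm([t-2h,t+2h])$ with a suitable chord or two-chord polygonal approximation — that strictly decreases $\E$, contradicting minimality.

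First I would fix an arbitrary parameter $t$ and small $h>0$, and set up three boundary points $\gm(t-2h)$, $\gm(t)$, $\gm(t+2h)$. The quantity $\gm(t+2h)-2\gm(t)+\gm(t-2h)$ measures the failure of $\gm(t)$ to be the midpoint of the chord joining $\gm(t\pm 2h)$; geometrically, $|\gm(t+2h)-2\gm(t)+\gm(t-2h)|$ is (up to a factor) the \emph{sagitta}-type deviation, and since the curve is convex and $C^1$, this is comparable to the area swept between the arc and the chord, which scales like $h\cdot(\text{deviation})$. The competitor replaces the arc $\gm([t-2h,t+2h])$ by the chord $\sigma$ from $\gm(t-2h)$ to $\gm(t+2h)$. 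As in Lemma \ref{C1}, this reduces perimeter by an amount comparable to the total turning of the arc while only decreasing the average-distance term (by the same monotonicity \eqref{De}, since the competitor is contained in $\Omega$) and decreasing area by a controlled amount. Using the estimates $\H^2(\Omega)\ge C_1$ and $\H^1(\pd\Omega)\le C_3$ from Lemma \ref{exist}, I would track how the perimeter-to-area ratio changes and extract the energy variation.

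The main quantitative step is to express the perimeter drop and the area drop in terms of $|\gm(t+2h)-2\gm(t)+\gm(t-2h)|$ and $h$. The perimeter saved by the chord equals $\H^1(\gm([t-2h,t+2h]))-|\gm(t+2h)-\gm(t-2h)|$; for a convex arc this chord-deficit is bounded below by a constant times the squared turning, which in turn is comparable to $|\gm(t+2h)-2\gm(t)+\gm(t-2h)|^2/h$. The area removed is a convex region between the arc and chord, bounded above by $h$ times $|\gm(t+2h)-2\gm(t)+\gm(t-2h)|$. Writing the first-order change in $\E$ and demanding that it be nonnegative (by minimality), I would obtain an inequality of the form
\begin{equation*}
\frac{\la}{\H^2(\Omega)}\bigl(\text{perimeter saved}\bigr)\le \frac{\la\,\H^1(\pd\Omega)}{\H^2(\Omega)^2}\bigl(\text{area removed}\bigr)+ o(\cdot),
\end{equation*}
which after inserting the geometric comparisons and the uniform bounds $C_1,C_3$ yields $|\gm(t+2h)-2\gm(t)+\gm(t-2h)|\le 4Ch^2$ with $C=C(p,\la)$ in the limit $h\to0$.

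**Anticipated Obstacle**

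The hard part will be making the geometric comparison between the discrete second difference and the arc-versus-chord quantities fully rigorous and uniform. Specifically, I expect the delicate step to be lower-bounding the chord-deficit (perimeter saved) by a constant times $|\gm(t+2h)-2\gm(t)+\gm(t-2h)|^2/h$ while simultaneously upper-bounding the removed area linearly, since both require careful convex-geometry estimates that do not degenerate as the arc flattens. One must be cautious that the constant $C$ depends only on $p$ and $\la$ through $C_1$ and $C_3$ and not on the particular $t$, $h$, or $\Omega$; controlling the error terms uniformly (so that the $\limsup_{h\to 0}$ is genuinely bounded at every $t$) is where the technical weight of the argument lies.
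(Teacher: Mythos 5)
Your overall scheme is, at its core, the same competitor mechanism the paper uses: cut a thin sliver off $\Omega$ near the point in question by replacing an arc of $\pd\Omega$ with a chord (convexity of the competitor is Lemma \ref{cut}), note that the average-distance term can only decrease since the competitor is contained in $\Omega$, and use minimality together with the uniform bounds $\H^2(\Omega)\ge C_1$, $\H^1(\pd\Omega)\le C_3$ of Lemma \ref{exist} to force (perimeter saved) $\le \frac{\H^1(\pd\Omega)}{\H^2(\Omega)}\cdot$(area removed); your bookkeeping of the ratio term is exactly the paper's \eqref{E1}--\eqref{min}. Where you genuinely diverge is the quantitative core. The paper never works intrinsically with the three points $\gm(t\pm 2h)$, $\gm(t)$: it uses the already-established $C^1$ regularity (Lemma \ref{C1}) to write $\pd\Omega$ locally as the graph of a nonnegative function $f$ over the tangent line at $p_0$, with $f(0)=0$, $f'(0)=0$, cuts \emph{one-sidedly} over $[0,\vep]$, reads off the perimeter drop $\ge\frac13\int_0^\vep|f'|^2\d x$ from the elementary expansion of $\sqrt{1+|f'|^2}$, and then closes via H\"older: from $\frac C2 f(\vep)\vep\ge\int_0^\vep|f'|^2\d x\ge\frac1\vep\big(\int_0^\vep|f'|\d x\big)^2$ and $f(\vep)\le\int_0^\vep|f'|\d x$ one gets $\int_0^\vep|f'|\d x\le\frac C2\vep^2$, hence $f(\vep)\le\frac C2\vep^2$, and the two-sided second-difference bound follows by repeating with $\vep<0$. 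This graph-plus-H\"older route is precisely what lets the paper avoid the convex-geometry lemmas you defer.

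About those deferred lemmas: your area bound (area removed $\lesssim h\,|\gm(t+2h)-2\gm(t)+\gm(t-2h)|$) is fine for convex arcs of small turning, and your target perimeter inequality, chord-deficit $\ge c\,|\gm(t+2h)-2\gm(t)+\gm(t-2h)|^2/h$, is in fact true; but the justification you sketch for it --- that the chord-deficit is bounded below by a constant times the squared total turning --- fails as stated, and this is the one step of your proof that would break. For a convex arc the turning can concentrate at the endpoints: an arc that is flat at height $\approx\delta\Theta$ over most of its length, with two bends of scale $\delta\ll h$ near its ends, has total turning $\Theta$ but chord-deficit only of order $\delta\Theta^2$, far smaller than $h\Theta^2$. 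The repair is to measure the tangent angle $\th(u)$ relative to the chord direction and use
\begin{align*}
\text{chord-deficit}\;=\;\int_{t-2h}^{t+2h}(1-\cos\th(u))\,\d u\;\ge\;\tfrac14\int_{t-2h}^{t+2h}\th(u)^2\,\d u,
\qquad
|\gm(t+2h)-2\gm(t)+\gm(t-2h)|\;\le\;\int_{t-2h}^{t+2h}|\th(u)|\,\d u,
\end{align*}
followed by Cauchy--Schwarz on the right-hand integral, which yields deficit $\ge D^2/(16h)$ with $D$ the second difference; note this $L^2$-versus-$L^1$ step is exactly the paper's H\"older step \eqref{fH} in disguise. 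With that repair your argument closes and has the minor advantage of bounding the arclength second difference of \eqref{C11} directly, rather than passing through graph coordinates; as written, however, the ``squared turning'' claim is a genuine gap.
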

We remark that \eqref{C11} implies $C^{1,1}$-regularity of $\pd \Omega$.

\begin{proof}
Consider an arbitrary point $p_0\in \pd \Omega$.
Since we proved that $\Omega$ is $C^1$-regular, consider a (local) orthogonal coordinate system with
origin in $p_0$, and $x$-axis oriented along the tangent derivative (at $p_0$), such that
$\Omega$ is entirely contained in the half-plane $\{y\geq 0\}$. 
The boundary $\pd \Omega$ is thus (locally) the graph of some nonnegative function $f$. Clearly, 
such $f$ satisfies $f(0)=0$.
\begin{figure}[ht]
\begin{center}
\includegraphics[scale=0.6]{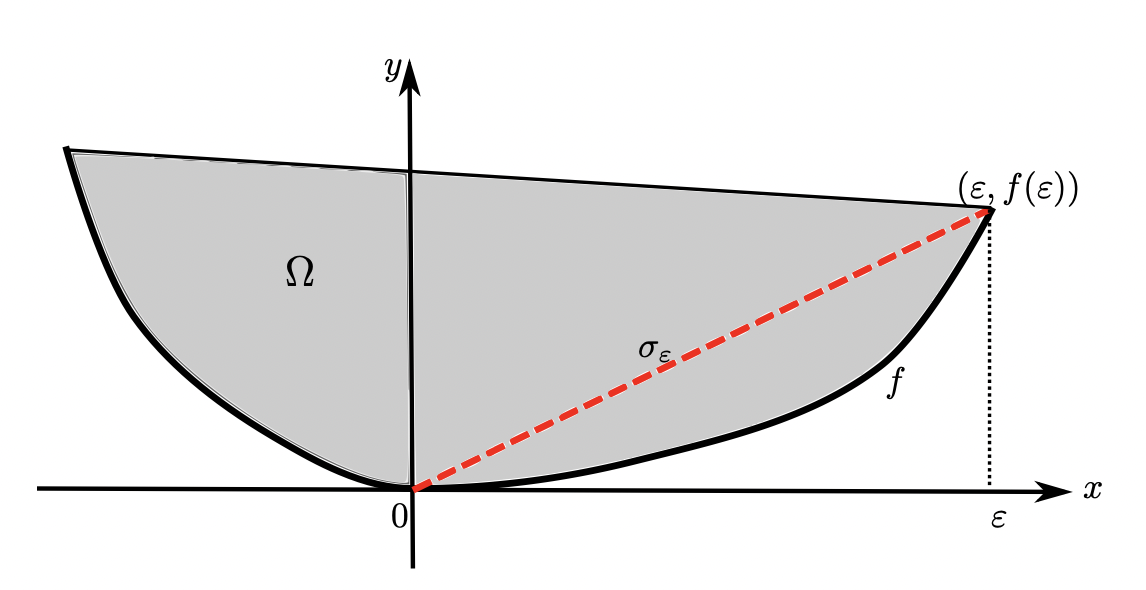}
\caption{\small{A schematic representation of the construction near $p_0=(0,0)$.}}
\label{smooth}
\end{center}
\end{figure}

Choose an arbitrary $0<\vep\ll 1$. 
Denote by
$$\sigma_\vep:=\{(x,y):0\leq x\leq \vep,\ y=x\cdot f(\vep)/\vep\} $$
the segment between the origin and $(\vep,f(\vep))$.
Let $L_\vep$ be the curve obtained by replacing $f([0,\vep])$ with $\sigma_\vep$. That is, 
\begin{equation*}
L_\vep := (\pd \Omega \backslash f([0,\vep])) \cup \sigma_\vep.
\end{equation*}
By construction (see Lemma \ref{cut}) $L_\vep$ is a convex Jordan curve, and let
$\Omega_\vep$ be the bounded region delimited by $ L_\vep$. Note that:
\begin{enumerate}

\item Clearly we can infer
\begin{equation}\label{De2}
\int_{\Omega_\vep} \dist^p(x,\pd \Omega_\vep)\d x\leq\int_{\Omega} \dist^p(x,\pd \Omega)\d x. 
\end{equation}

\item For areas, since by construction it holds $\Omega_\vep\sse \Omega$, we have 
\begin{align}
\H^2(\Omega)-\H^2(\Omega_\vep) &=\H^2(\Omega \backslash \Omega_\vep)\notag\\
&=\H^2(\{ (x,y):0\leq x\leq \vep,\ f(x)\leq y\leq x\cdot f(\vep)/\vep\})\notag\\
&=\int_0^\vep [xf(\vep)/\vep - f(x)] \d x
=\frac{f(\vep)\vep}{2} -\int_0^\vep f(x)\d x.\label{Da2}
\end{align}

\item For perimeters, note that $f'(0)=0$, so $|f'|$ is small near $0$. In particular,
by choosing sufficiently small $\vep$, we can ensure that
$$\sqrt{1+|f'(x)|^2}\leq2 \qquad \text{for all } x\in(0,\vep),$$
and also
$|f(\vep)|/ \vep$ can be made as small as we need, so to satisfy
\[  \sqrt{1+\frac{f(\vep)^2}{\vep^2}} 
=1+\frac{f(\vep)^2}{2\vep^2}  -\frac{1}{8}\Big(\frac{f(\vep)^2}{\vep^2}\Big)^2 + O\Big(
\Big(\frac{f(\vep)^2}{\vep^2} \Big)^3 \Big)\le
 1+\frac{f(\vep)^2}{2\vep^2}. \]
Therefore,
\begin{align*}
\H^1(\pd \Omega)-\H^1(\pd \Omega_\vep) 
&=\int_0^\vep \Big(\sqrt{1+|f'(x)|^2} - \sqrt{1+\frac{f(\vep)^2}{\vep^2}
}\Big)\d x \\
&\ge 
\int_0^\vep \Big(\sqrt{1+|f'(x)|^2} -  1-\frac{f(\vep)^2}{2\vep^2}\Big)\d x ,
\end{align*}
where, since for sufficiently small
 $\vep\ll1$ the quantity $\frac{f(\vep)^2}{2\vep^2}$ can be made arbitrarily
small, we have
\[  \int_0^\vep \frac{f(\vep)^2}{2\vep^2}\d x =o(\vep),\qquad \vep\ll1.\]
 Thus, for all sufficiently small $\vep$,
\begin{align}
\H^1(\pd \Omega)-\H^1(\pd \Omega_\vep) 
&=\int_0^\vep \Big(\sqrt{1+|f'(x)|^2} - \sqrt{1+\frac{f(\vep)^2}{\vep^2}
}\Big)\d x \notag\\
&\ge\int_0^\vep \Big(\sqrt{1+|f'(x)|^2} -1\Big)\d x + o(\vep) \notag\\
&=\int_0^\vep \frac{|f'(x)|^2}{\sqrt{1+|f'(x)|^2} +1}\d x  + o(\vep) \geq
\frac13\int_0^\vep |f'(x)|^2\d x.\label{Dp2}
\end{align}
\end{enumerate}
Combining \eqref{De2}, \eqref{Da2} and \eqref{Dp2} gives
\begin{align}
\E(\Omega_\vep)&=\int_{\Omega_\vep} \dist^p(x,\pd \Omega_\vep)\d x+\la \frac{\H^1(\pd \Omega_\vep)}{\H^2(\Omega_\vep)}\notag\\
&\leq
\int_{\Omega} \dist^p(x,\pd \Omega)\d x + \la \frac{\H^1(\pd \Omega)-\frac13\int_0^\vep |f'(x)|^2\d x}{\H^2(\Omega)
-(\frac{f(\vep)\vep}{2} -\int_0^\vep f(x)\d x)}.\label{E1}
\end{align}
Since
\begin{align*}
\frac{\H^1(\pd \Omega)}{\H^2(\Omega)
-(\frac{f(\vep)\vep}{2} -\int_0^\vep f(x)\d x)} &=
\frac{\H^1(\pd \Omega)}{\H^2(\Omega)}\cdot\frac{\H^2(\Omega)}{
\H^2(\Omega)-(\frac{f(\vep)\vep}{2} -\int_0^\vep f(x)\d x)}\\
&=
\frac{\H^1(\pd \Omega)}{\H^2(\Omega)}\cdot\bigg(1+\frac{\frac{f(\vep)\vep}{2} -\int_0^\vep f(x)\d x}{
\H^2(\Omega)-(\frac{f(\vep)\vep}{2} -\int_0^\vep f(x)\d x)}\bigg),
\end{align*}
estimate \eqref{E1} reads
\begin{align}
\E(\Omega_\vep)&\leq\int_{\Omega} \dist^p(x,\pd \Omega)\d x -\la \frac{\frac13\int_0^\vep |f'(x)|^2\d x}{\H^2(\Omega)
-(\frac{f(\vep)\vep}{2} -\int_0^\vep f(x)\d x)}\notag\\
&+\la\frac{\H^1(\pd \Omega)}{\H^2(\Omega)}\cdot\bigg(1+\frac{ \frac{f(\vep)\vep}{2} -\int_0^\vep f(x)\d x}{
\H^2(\Omega)-(\frac{f(\vep)\vep}{2} -\int_0^\vep f(x)\d x)}\bigg)\notag\\
&=\E(\Omega)+\la\frac{\frac{\H^1(\pd \Omega)}{\H^2(\Omega)}(\frac{f(\vep)\vep}{2} -\int_0^\vep f(x)\d x) - \frac13\int_0^\vep |f'(x)|^2\d x}{
\H^2(\Omega)-(\frac{f(\vep)\vep}{2} -\int_0^\vep f(x)\d x)}. \label{E2}
\end{align}
Since $\Omega$ is a minimizer, Lemma \ref{exist} gives $\H^2(\Omega)>0$, and note that
$$\frac{f(\vep)\vep}{2} -\int_0^\vep f(x)\d x\leq \frac{\H^2(\Omega)}2$$
for all sufficiently small $\vep$, hence the denominator in \eqref{E2}
is positive. Thus the minimality of $\Omega$ forces the numerator in \eqref{E2} to be nonnegative, i.e.,
\begin{equation}
3\frac{\H^1(\pd \Omega)}{\H^2(\Omega)}\bigg(\frac{f(\vep)\vep}{2} -\int_0^\vep f(x)\d x\bigg) - \int_0^\vep |f'(x)|^2\d x\geq 0.\label{min}
\end{equation}
Equation \eqref{Da2} shows
\[ 
 \frac{f(\vep)\vep}{2} -\int_0^\vep f(x)\d x = \H^2(\Omega)-\H^2(\Omega_\vep) \ge 0 
\]
since by construction $\Omega_\vep\sse \Omega$.
Lemma \ref{exist} gives
\begin{equation*}
 \H^2(\Omega)\geq C_1,\qquad  \H^1(\pd \Omega)\leq C_3,
\end{equation*}
hence
\begin{equation*}
3\frac{\H^1(\pd \Omega)}{\H^2(\Omega)} \leq \frac{3 C_3 }{C_1}=:C,
\end{equation*}
and \eqref{min} forces
\begin{equation}\label{E3}
C \bigg(\frac{f(\vep)\vep}{2} -\int_0^\vep f(x)\d x\bigg) \geq \int_0^\vep |f'(x)|^2\d x.
\end{equation}
Since $\Omega$ is convex, and we assumed (at the beginning of this proof) that $\Omega \sse \{y\geq 0\}$,
$f$ is nonnegative, hence \eqref{E3} forces
\begin{equation}\label{E4}
\frac{C}{2}f(\vep)\vep \geq \int_0^\vep |f'(x)|^2\d x.
\end{equation}
Note that, since $f(0)=0$, it follows
\begin{equation}
f(\vep) = \int_0^\vep f'(x)\d x \leq\int_0^\vep |f'(x)|\d x.\label{f1}
\end{equation}
By H\"older's inequality,
\begin{equation}
\int_0^\vep |f'(x)|^2\d x \geq \frac1\vep\bigg(\int_0^\vep |f'(x)|\d x\bigg)^2,\label{fH}
\end{equation}
hence  
\begin{align*}
\frac{C}{2}&\vep\int_0^\vep |f'(x)|\d x 
\overset{\eqref{f1}}\geq 
\frac{C}{2}f(\vep)\vep
\overset{\eqref{E4}}\geq\int_0^\vep |f'(x)|^2\d x\overset{\eqref{fH}}\geq
\frac1\vep\bigg(\int_0^\vep |f'(x)|\d x\bigg)^2\\
&\Lra \frac{C}{2}\vep^2\geq \int_0^\vep |f'(x)|\d x\geq \int_0^\vep f'(x)\d x =f(\vep).
\end{align*}
The above arguments can be repeated for $\vep<0$, $|\vep|\ll1$ (or equivalently, when the orientation of $x$-axis is inverted).
The arbitrariness of $\vep$ then gives
\begin{align*}
\limsup_{\vep\to0}&\frac{|f(\vep)-2f(0)+f(-\vep)|}{(\vep/2)^2} \leq  4C,
\end{align*}
concluding the proof.
\end{proof}

Now we prove part (3) of Theorem \ref{main}.
\begin{lemma}
Given $p\geq1$, $\la>0$, any minimizer $\Omega$ of $\E$ satisfies
\begin{equation*}
\frac{\H^1(\pd \Omega)}{\H^2(\Omega)} = \frac{p+2}{\la(p+3)}\min_{\mathcal A} \E.
\end{equation*}
\end{lemma}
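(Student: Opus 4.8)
The plan is to exploit the scaling structure of $\E$ by testing the minimizer $\Omega$ against its dilations. For $t>0$, the dilated set $t\Omega:=\{tx:x\in\Omega\}$ is again compact, convex and Hausdorff two-dimensional, so $t\Omega\in\A$; hence the minimality of $\Omega$ forces $\E(\Omega)\le\E(t\Omega)$ for every $t>0$.

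First I would record how the two constituent terms scale. Since $\pd(t\Omega)=t\,\pd\Omega$, one has $\dist(tx,\pd(t\Omega))=t\,\dist(x,\pd\Omega)$, and the change of variables $y=tx$ (with $\d y=t^2\d x$) gives
\[
\int_{t\Omega}\dist^p(y,\pd(t\Omega))\d y=t^{p+2}\int_\Omega\dist^p(x,\pd\Omega)\d x.
\]
Likewise $\H^1(\pd(t\Omega))=t\,\H^1(\pd\Omega)$ and $\H^2(t\Omega)=t^2\,\H^2(\Omega)$, so the perimeter-to-area ratio scales like $t^{-1}$. Writing $A:=\int_\Omega\dist^p(x,\pd\Omega)\d x$ and $B:=\la\,\H^1(\pd\Omega)/\H^2(\Omega)$ for the two pieces of the energy at $\Omega$, this yields the one-parameter expression
\[
g(t):=\E(t\Omega)=t^{p+2}A+t^{-1}B.
\]

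The key step is then a first-order (Euler) condition. For $t>0$ the function $g$ is smooth, and by minimality it attains its minimum at the interior point $t=1$, so $g'(1)=0$. Computing $g'(t)=(p+2)\,t^{p+1}A-t^{-2}B$ and evaluating at $t=1$ gives the relation $B=(p+2)A$. Combining this with $\min_{\A}\E=\E(\Omega)=A+B=(p+3)A$ yields $A=\min_{\A}\E/(p+3)$ and hence $B=(p+2)\min_{\A}\E/(p+3)$; recalling that $B=\la\,\H^1(\pd\Omega)/\H^2(\Omega)$ and dividing by $\la$ produces exactly the claimed identity.

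I do not anticipate a serious obstacle, as the argument reduces to a single scaling computation. The only points requiring care are that $t\Omega$ remains admissible for all $t>0$ (immediate from the definition of $\A$, since dilation preserves compactness, convexity, and two-dimensionality) and that $\H^2(\Omega)>0$, so that $B$ is finite and $g$ is genuinely well-defined and differentiable on $(0,\infty)$; both are guaranteed by Lemma~\ref{exist}, which provides $\H^2(\Omega)\ge C_1>0$.
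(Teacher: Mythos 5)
Your proposal is correct and is essentially the paper's own proof: the paper likewise tests the minimizer against homotheties $T_r(\Omega)=r\Omega$, uses the same scaling exponents $r^{p+2}$ and $r^{-1}$ for the two terms, and imposes the first-order condition $f'(1)=0$ at the interior minimum $r=1$ to get $\int_\Omega \dist^p(x,\pd\Omega)\d x=\frac{\la}{p+2}\frac{\H^1(\pd\Omega)}{\H^2(\Omega)}$, which is your relation $B=(p+2)A$. Your bookkeeping with $A$, $B$ and the justification that $\H^2(\Omega)\ge C_1>0$ (from Lemma~\ref{exist}) match the paper's argument point for point.
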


\begin{proof}
Let $\Omega$ be an arbitrary minimizer. Endow $\R^2$ with a Cartesian coordinate system,
 and assume without loss of generality that $(0,0)$ is in the interior part of $\Omega$.
For any $r>0$, denote by
\begin{equation*}
T_r:\R^2\lra \R^2,\qquad T_r(x):=rx
\end{equation*}
 the homothety of center $(0,0)$ and ratio $r$. 
Note that $T_r(\Omega)\in {\mathcal A}$ for any $r>0$, and the scalings are
\begin{align*}
\int_{T_r(\Omega)} \dist^p(x,\pd T_r(\Omega))\d x&= r^{p+2}\int_{\Omega} \dist^p(x,\pd \Omega)\d x,\\
\frac{\H^1(\pd T_r(\Omega))}{\H^2(T_r(\Omega))}&=\frac1r\cdot \frac{\H^1(\pd \Omega)}{\H^2(\Omega)}.
\end{align*}
Define the function
\begin{equation*}
f:(0,+\8)\lra(0,+\8),\qquad f(r):=\E(T_r(\Omega))=r^{p+2}\int_{\Omega} \dist^p(x,\pd \Omega)\d x+\frac\la r \cdot\frac{\H^1(\pd \Omega)}{\H^2(\Omega)}.
\end{equation*}
Since $f$ is smooth, and attains a global minimum at $r=1$, it follows
\begin{align*}
f'(1)&=(p+2)\int_{\Omega} \dist^p(x,\pd \Omega)\d x-\la \cdot\frac{\H^1(\pd \Omega)}{\H^2(\Omega)}=0\\
&\Lra \int_{\Omega}\dist^p(x,\pd \Omega)\d x=\frac\la{p+2}\cdot\frac{\H^1(\pd \Omega)}{\H^2(\Omega)},
\end{align*}
hence
\begin{equation*}
\E(\Omega)=\frac{\la(p+3)}{p+2}\cdot\frac{\H^1(\pd \Omega)}{\H^2(\Omega)}=\min_{\mathcal A} \E,
\end{equation*}
and the proof is complete.
\end{proof}

%
%

Let us conclude the paper with some final remarks. 
In this paper we investigated the minimization problem for the average distance functional, with perimeter-to-area ratio penalization, in the plane. We proved the existence and $C^{1,1}$ regularity of minimizers, mainly relying on constructing suitable competitors. Echoing and  developing former studies that exclusively focused on either the 1D average distance problem or purely surface area-to-volume ratio question, by considering optimal sets of combined energy from broader and more eclectic perspectives, this study enriches and deepens our understanding of penalized average distance problem. 

\medskip

We remark that 
all the main results of this paper, i.e. bounds
	on the perimeter and area, and $C^{1,1}$-regular of minimizers, 
		 can be also proven if we replace the 
perimeter-to-area term with 		 
		  a generalized ratio 
		 of the form
$	 \la \frac{\H^1(\pd \Omega)^\al}{\H^2(\Omega)^\bt}$,
	symbolizing a perimeter term normalized (by area) with different scaling exponents $\alpha$ and $\beta$.
That is, we consider 
an energy of the form
	\begin{equation}
	\label{generalized energy}
	 E_{p,\la}^{\al,\bt}(\om):= \int_{\Omega} \dist^p(x,\pd \Omega)\d x+
	 \la \frac{\H^1(\pd \Omega)^\al}{\H^2(\Omega)^\bt},
	 	\end{equation}
	where $\al,\bt$ are given powers satisfying $2\bt>\al > \frac{p}{p+1}\bt>0$.
	This last bound, combined with Young's inequality, allows
	us to easily bound the perimeter, and the subsequent results. It can also be quickly checked
	that if $\al>2\bt$, then minimizers are just single points.
One more remark is that according to \eqref{p claim1}, if in \eqref{generalized energy} we pick $\alpha = p, \beta = p+1$ and $\lambda = C $ as in \eqref{p claim1}, we get
\begin{eqnarray}
 E_{p,\la}^{p, p+1 }(\om) \geq  C \frac{\H^2(\Omega)^{p+1}}{\H^1(\pd \Omega)^p} + C\frac{\H^1(\pd \Omega)^p}{\H^2(\Omega)^{p+1}} \geq 2 C. \nonumber
\end{eqnarray}
So in this case if the optimal constant in \eqref{p claim1} is obtained by a circle, the optimal shape for \eqref{func} is a circle. An interesting question worthy further consideration is if the circle would be the minimizer for other parameters, as in similar discussions given in 
\cite{mk,gnru,gnro,ffmmm}.  Another natural question is to ask if in general one may improve the $C^{1,1}$ regularity by  combining the established results with elliptic regularity theory, given that the variation of the perimeter-to-area ratio leads to a system of second order differential equations of the boundary parametrization.

\medskip

In addition, it is interesting to improve the results of this paper to higher dimensions,
again with a generalized ratio penalization. However, the geometric
complexity of higher dimensional objects can increase
significantly, 
and more work is required to exclude more complicated sets (e.g., ``tentacles''), which were not an issue in the planar case,
thus we expected to rely on rather different tools and arguments.


\section*{Acknowledgments}
The work of QD is supported in part by the National Science Foundation under award DMS-2012562 and the ARO MURI Grant W911NF-15-1-0562. 
XYL acknowledges the support of his NSERC Discovery Grant 
``Regularity of minimizers and pattern formation in geometric minimization problems''.

\end{document}